\newtheorem{theorem}{Theorem}[section]
\newtheorem{definition}[theorem]{Definition}
\newtheorem{lemma}[theorem]{Lemma}
\newtheorem{remark}[theorem]{Remark}
\numberwithin{equation}{section}
\def\RR{{\mathbb{R}}}
\def\NN{{\mathbb{N}}}
\newcommand{\C}{\mathcal{C}}
\newcommand{\norm}[2]{{\left\|#1\right\|}_{#2}}
\newcommand{\fl}[2]{(-d_x^{\,2})^#1#2}
\newcommand{\M}{\mathcal M(\omega\times(0,T_{\rm min}))}
\title{Controllability of the one-dimensional fractional heat equation under positivity constraints}
\author{Umberto Biccari\textsuperscript{$\,\S$,$\dagger$}}  
\address{\textsuperscript{$\S$}\,Chair of Computational Mathematics, Fundaci\'on Deusto Av. de las Universidades 24, 48007 Bilbao, Basque Country, Spain.}
\address{\textsuperscript{$\dagger$}\,Facultad de Ingenier\'ia, Universidad de Deusto, Avenida de las Universidades 24, 48007 Bilbao, Basque Country, Spain.}
\email{umberto.biccari@deusto.es, u.biccari@gmail.com}
\thanks{This project has received funding from the European Research Council (ERC) under the European Union's Horizon 2020 research and innovation programme (grant agreement NO. 694126-DyCon). The work of the three authors is partially supported by the Air Force Office of Scientific Research under Award NO: FA9550-18-1-0242. The work of the first and of the third author was partially supported by the Grant MTM2017-92996-C2-1-R COSNET of MINECO (Spain) and by the ELKARTEK project KK-2018/00083 ROAD2DC of the Basque Government. The work of the third author was partially supported by the Alexander von Humboldt-Professorship program, the European Union’s Horizon 2020 research and innovation programme under the Marie Sklodowska-Curie grant agreement NO. 765579-ConFlex, by the CNCS-UEFISCDI Grant NO: PN-III-P4-ID-PCE-2016-0035 and by the Grant ICON-ANR-16-ACHN-0014 of the French ANR}
\author{Mahamadi Warma\textsuperscript{$\ast\ast$}}  
\address{\textsuperscript{$\ast\ast$}\,University of Puerto Rico, Rio Piedras Campus, Department of Mathematics,  College of Natural Sciences,  17 University AVE. STE 1701  San Juan PR 00925-2537 (USA).}
\email{mahamadi.warma1@upr.edu, mjwarma@gmail.com}
\author{Enrique Zuazua\textsuperscript{$\ast$,$\S$,$\ddagger$}}
\address{\textsuperscript{$\ast$}\, Chair in Applied Analysis, Alexander von Humboldt-Professorship, Department of Mathematics  Friedrich-Alexander-Universit\"at, Erlangen-N\"urnberg, 91058 Erlangen, Germany.}
\address{\textsuperscript{$\ddagger$}\, Departamento de Matem\'aticas, Universidad Aut\'onoma de Madrid, 28049 Madrid, Spain.} 
\email{enrique.zuazua@fau.de}
\subjclass[2010]{35K05, 35R11, 35S05, 93B05, 93C20}
\keywords{Fractional heat equation, constrained controllability, waiting time}
\begin{document}

\begin{abstract}
In this paper, we analyze the controllability properties under positivity constraints on the control or the state of a one-dimensional heat equation involving the fractional Laplacian $\fl{s}{}$ ($0<s<1$) on the interval $(-1,1)$. We prove the existence of a minimal (strictly positive) time $T_{\rm min}$ such that the fractional heat dynamics can be controlled from any initial datum in  $L^2(-1,1)$ to a positive trajectory through the action of a positive control, when $s>1/2$. Moreover, we show that in this minimal time constrained controllability is achieved by means of a control that belongs to a certain space of Radon measures. We also give some numerical simulations that confirm our theoretical results.
\end{abstract}

\maketitle

\section{Introduction}\label{intro_sec}

The main purpose of the present paper is to completely analysis the constrained controllability properties of the  heat-like equation involving the fractional Laplacian on $(-1,1)$. That is, the system 
\begin{equation}\label{frac_heat_intro}
	\begin{cases}
		z_t+\fl{s}{z} = u\chi_{\omega\times (0,T)}, \quad &\mbox{ in }\; (-1,1)\times(0,T),\quad  s\in(0,1),
		\\
		z=0, &\mbox{ in } (\mathbb R \setminus (-1,1))\times (0,T),
		\\
		z(\cdot,0)=z_0,\; &\mbox{ in }\; (-1,1),
	\end{cases}
\end{equation}
where the fractional Laplace operator $\fl{s}$ ($0<s<1$) is defined for a smooth function $v$ by the following singular integral:
\begin{align*}
	\fl{s}{v}(x):= c_s\,\mbox{P.V.}\int_{\RR}\frac{v(x)-v(y)}{|x-y|^{1+2s}}\,dy,\;\;x\in \RR.
\end{align*}
We refer to Section \ref{sec-main-results} for more details. 

In \eqref{frac_heat_intro}, the solution $z$ is the state to be controlled and $u$ is our control function which is localized in an open set $\omega\subset (-1,1)$.

The controllability properties of the fractional heat equation in open subsets of $\RR^N$ ($N\ge 2$) are still not fully understood by the mathematical community. The classical tools (see e.g. \cite{Zua1} and the references therein) like the Carleman estimates usually used to study the controllability for heat equations are still not available for the fractional Laplacian (except on the whole space $\RR^N$). For this reason, our analysis in the present article is limited to the one-dimensional case. Another difficulty for analyzing the system \eqref{frac_heat_intro} by using some spectral properties is that contrarily to the local case $s=1$ where the eigenvalues and eigenfunctions of the system are well known, for the fractional case, we just know an asymptotic for the eigenvalues and an explicit  formula for the eigenfunctions is not accessible.

In the absence of constraints, the fractional heat equation \eqref{frac_heat_intro} is null-controllable in any positive time $T>0$, provided that $s>1/2$. This has been proved in \cite{biccari2017controllability} by using the gap condition on the eigenvalues, and has been validated through numerical experiments. In space dimension $N\ge 2$, the best possible controllability result available for the fractional heat equation is the approximate controllability recently obtained in \cite{KeWa} for interior controls and in \cite{WarC} for exterior controls. 

In this work, we have obtained the following specific results: 
\begin{itemize}
	\item[(i)] Firstly, we shall show in Theorem \ref{control_thm_unconstr} that if $s>1/2$, then the system \eqref{frac_heat_intro} is controllable from any given initial datum in $L^2(-1,1)$ to zero (and, by translation, to trajectories) in any positive time $T>0$ by means of $L^\infty$-controls. This extends the analysis of \cite{biccari2017controllability}, where only the classical case of $L^2$-controls was considered. The proof will use the canonical approach of reducing the question of controllability with an $L^\infty$-control to a dual observability problem in $L^1$, and the use of Fourier series expansions to obtain a new result on the $L^1$-observation of linear combinations of real exponentials.

	\item[(ii)] Secondly, as a consequence of our first result, in Theorem \ref{control_thm} we shall prove the existence of a minimal (strictly positive) time $T_{\rm min}$ such that the fractional heat dynamics \eqref{frac_heat_intro} can be controlled to positive trajectories through the action of a positive control. Moreover, if the initial datum is supposed to be positive as well, then the maximum principle guarantees the positivity of the states too. 
\end{itemize}

Fractional order operators (in particular the fractional Laplace operator) have recently emerged as a modelling alternative in various branches of science.  They usually describe anomalous diffusion. A number of stochastic models for explaining anomalous diffusion have been introduced in the literature. Among them we quote the fractional Brownian motion, the continuous time random walk, the L\'evy flights, the Schneider gray Brownian motion, and more generally, random walk models based on evolution equations of single and distributed fractional order in  space (see e.g. \cite{DS,GR,Man,Sch}). In general, a fractional diffusion operator corresponds to a diverging jump length variance in the random walk. 

In many PDEs models some constraints need to be imposed when considering practical applications. This is for instance the case of diffusion processes (heat conduction, population dynamics, etc.) where realistic models have to take into account that the state represents some physical quantity which must necessarily remain positive (see e.g. \cite{chan1990optimal}). 

This topic is also related to some other relevant applications, like the optimal management of compressors in gas transportation networks requiring the preservation of severe safety constraints (see e.g. \cite{colombo2009optimal,martin2006mixed,steinbach2007pde}). 

Finally, this issue is also important in other PDEs problems based on scalar conservation laws, including (but not limited to) the Lighthill-Whitham and Richards traffic flow models (\cite{colombo2004minimising,lighthill1955kinematic,richards1956shock}) or the isentropic compressible Euler equation (\cite{glass2007controllability}).

Most of the existing controllability theory for PDEs has been developed in the absence of constraints on the controls and/or the state. To the best of our knowledge, the literature on constrained controllability is currently very limited and the majority of the available results do not guarantee that controlled trajectories fulfill the physical restrictions of the processes under consideration. 

In the context of the heat equation, the problem of constrained controllability has been firstly addressed in \cite{loheac2017minimal} in the linear case and has been later extended to semi-linear models in \cite{pighin2018controllability}. In particular, in the mentioned references, the authors proved that the linear and semi-linear heat equations are controllable to any positive steady state or trajectory by means of non-negative boundary controls, provided the control time is long enough. Moreover, for positive initial data, the maximum principle guarantees also that the positivity of the state is preserved. On the other hand, it was also proved that controllability by non-negative controls fails if the time is too short, whenever the initial datum differs from the final target.

In addition to the results for heat-like equations, constrained controllability has been also analyzed in the context of population dynamics. To be more precise, in \cite{hegoburu2018controllability,maity2018controllability} it has been shown that the controllability of Lotka-McKendrick type systems with age structuring can be obtained by preserving the positivity of the state, once again in a long enough time horizon. These results have been recently extended in \cite{maity2018control} to general infinite-dimensional systems with age structure.

The study of the controllability properties under positivity constraints is a very reasonable question for scalar-valued parabolic equations, which are canonical examples where the positivity is preserved for the free dynamics. Therefore, the issue of whether the system can be controlled in between two states by means of positive controls, by possibly preserving also the positivity of the controlled solution, arises naturally. 

The existence of a minimal time for constrained controllability is in counter-trend with respect to the unconstrained case, in which linear and semi-linear parabolic systems are known to be controllable at any positive time. Notwithstanding, it is not surprising in the context of constrained controllability. Indeed, often times, norm-optimal controls allowing to reach the target at the final time are restrictions of solutions of the adjoint system. Accordingly these controls experience large oscillations in the proximity of the final time, which are enhanced when the time horizon of the control is small. This eventually leads to control trajectories that go beyond the physical thresholds and fail to fulfill the positivity constraint (see \cite{glowinski2008exact}). On the other hand, when the time interval is long, we expect the control property to be achieved with controls of small amplitude, thus ensuring small deformations of the state and, in particular, preserving its positivity. 

In order to prevent this highly oscillatory behavior, in some more recent papers (see \cite{fernandez2014numerical} and the references therein) several authors proposed new techniques for the computation of the controls which do not apply duality arguments but explore instead a direct approach in the framework of global Carleman estimates. More precisely, they consider controls that minimize a functional involving weighted integrals of the state and the control itself. Nevertheless, while this methodology does not generate any spurious oscillations in the computed control, it also requires a large enough time horizon for the controllability of the equation. This is a further confirmation that a minimum strictly positive controllability time may appear naturally when imposing constraints on the control. Roughly speaking, by imposing constraints to the control, we are somehow providing an impediment for the state to reach the target, when the control time horizon is long enough. This behavior is then a warning that existing unconstrained controllability results, that are valid within arbitrarily short time, may be unsuitable in practical applications in which state-constraints need to be preserved along controlled trajectories. 

For completeness, we mention that constrained controllability problems analogous to \cite{loheac2017minimal,pighin2018controllability} have been recently treated in \cite{balc2018global}. There, the author considered again the controllability problem for a semi-linear heat equation, this time with interior distributed controls, and proved the exact controllability to positive trajectories. However, we have to stress that, in that reference, no positivity assumptions on the control is considered. Consequently, the controllability result is achieved in any positive time $T>0$.

In addition to the results for parabolic equations, similar questions for the linear wave equation have been analyzed in \cite{pighin2019controllability}. There, the authors proved the controllability to steady states and trajectories through the action of a positive control, acting either in the interior or on the boundary of the considered domain. Nevertheless, in that case control and state positivity are not interlinked. Indeed, because of the lack of a maximum principle, the sign of the control does not determine the sign of the solution whose positivity is no longer guaranteed. 

The rest of the paper is organized as follows. We state the main result of the paper in Section \ref{sec-main-results}.
In Section \ref{preliminary_sec}, we start by presenting some preliminary technical results that are needed throughout the paper. Moreover, we give there the proof of the unconstrained controllability of the fractional heat equation \eqref{frac_heat_intro} with $L^\infty$-controls. Section \ref{pr-main-rel} is devoted to the proof of the main result, namely, Theorem \ref{control_thm} below. The proof is divided in three parts. In Section \ref{control_thm_sec}, we prove the first part concerning the constrained controllability of \eqref{frac_heat}. In Section \ref{pos_time_thm_sec}, we obtain the strict positivity of the minimal controllability time $T_{\rm min}$. Section \ref{measure_control_thm_sec} is devoted to the proof of the controllability in minimal time by means of measure controls. In Section \ref{numerical_sec}, we present some numerical simulations validating our theoretical results. Finally, in Section \ref{sec-con-rem} we give some concluding remarks and propose some open problems.

\section{Problem formulation and main results}\label{sec-main-results}

In this section, we formulate precisely the problem we would like to investigate and we state our main results.

Let $T>0$ be a real number, and define $Q:=(-1,1)\times (0,T)$ and $Q^c:=(-1,1)^c\times (0,T)$ where $(-1,1)^c:=\RR\setminus(-1,1)$. Consider the following controllability problem for the fractional heat equation:
\begin{align}\label{frac_heat}
	\begin{cases}
		z_t+\fl{s}{z} = u\chi_{\omega\times(0,T)} & \mbox{ in } \,Q,
		\\
		z = 0 & \mbox{ in } \,Q^c,
		\\
		z(\cdot,0) = z_0(\cdot) & \mbox{ in } \,(-1,1).
	\end{cases}
\end{align}

In \eqref{frac_heat}, $\omega\subset (-1,1)$ is the control region, $u$ is the control function and $z$ is the state to be controlled, while for $s\in(0,1)$, the operator $\fl{s}{}$ is the fractional Laplacian, defined for any function $v$ sufficiently smooth as the following singular integral:
\begin{align}\label{fl}
	\fl{s}{v}(x):= c_s\,\mbox{P.V.}\int_{\RR}\frac{v(x)-v(y)}{|x-y|^{1+2s}}\,dy,\;\;x\in \RR,
\end{align}
with $c_s$ an explicit normalization constant (see e.g. \cite{di2012hitchhiker}).  In Section \ref{preliminary_sec} we shall specify the difference between the considered problem \eqref{frac_heat} and the case of the spectral fractional Laplace operator.

It is known (see \cite{biccari2017controllability}) that the fractional heat equation \eqref{frac_heat} is null controllable in any time $T>0$ by means of a control $u\in L^2(\omega\times (0,T))$ if and only if $s>1/2$. In other words, given any $z_0\in L^2(-1,1)$ and $T>0$, there exists a control function $u\in L^2(\omega\times (0,T))$ such that the corresponding unique solution $z$ of \eqref{frac_heat} satisfies $z(x,T)=0$ a.e. in $(-1,1)$. If $s\leq 1/2$, instead, null-controllability cannot be achieved and the system turns out to be only approximately controllable. Besides, the equation being linear, by translation the same result holds if the final target is a trajectory $\widehat{z}$.

Moreover, it is also known (see Lemma \ref{lem} below) that the fractional heat equation preserves positivity. More precisely, if $z_0$ is a given non-negative initial datum in $L^2(-1,1)$ and $u$ is a non-negative function, then so it is for the solution $z$ of \eqref{frac_heat}. Hence, the following question arises naturally: 
\begin{quotation}
	{\bf Can we control the fractional heat dynamics \eqref{frac_heat} from any initial datum $z_0\in L^2(-1,1)$ to any positive trajectory $\widehat{z}$, under positivity constraints on the control and/or the state?}
\end{quotation}

In other words we want to analyze whether it is possible to choose a control $u\geq 0$ steering the solution of \eqref{frac_heat} from $z_0\in L^2(-1,1)$ to a positive trajectory $z(\cdot,T)=\widehat{z}(\cdot,T)>0$, while possibly maintaining this solution non-negative along the whole time interval, i.e., 
\begin{align*}
	z(x,t)\geq 0 \;\; \textrm{ for every }\;\; (x,t)\in (-1,1)\times (0,T).
\end{align*}

Clearly, we are only interested in the case $\widehat{z}(\cdot,T)\neq z_0$. Otherwise, the trajectory $z\equiv z_0=\widehat{z}(\cdot,T)$ trivially solves the problem.

As we will see, the answer to the above question is positive, provided that the controllability time is large enough. In particular, our main results in the present paper are the following.

\begin{theorem}\label{control_thm}
Let $s>1/2$, $z_0\in L^2(-1,1)$ and let $\widehat{z}$ be a positive trajectory, i.e., a solution of \eqref{frac_heat} with initial datum $0< \widehat{z}_0\in L^2(-1,1)$ and right hand side $\widehat{u}\in L^\infty(\omega\times(0,T))$. Assume that there exists $\nu> 0$ such that $\widehat{u}\geq\nu$ a.e in $\omega\times(0,T)$. Then, the following assertions hold.
\begin{itemize}
	\item[(I)] There exist $T>0$ and a non-negative control $u\in L^\infty(\omega\times(0,T))$ such that the corresponding solution $z$ of \eqref{frac_heat} satisfies $z(x,T) = \widehat{z}(x,T)$ a.e. in $(-1,1)$. Moreover, if $z_0\geq 0$, then $z(x,t)\geq 0$ for every $(x,t)\in (-1,1)\times (0,T)$. 	
		
	\item[(II)] Define the minimal controllability time by
	\begin{align}\label{t_min}
		T_{\rm min}(z_0,\widehat{z}):= \inf\Big\{T>0:\exists\;\; 0&\leq u\in L^\infty(\omega\times(0,T)) \textrm{ s.t. } \notag 
		\\
		& z(\cdot,0)=z_0 \textrm{ and } z(\cdot,T)=\widehat{z}(\cdot,T)\Big\}.
	\end{align}
	Then, $T_{\rm min}>0$.
		
	\item[(III)] For $T = T_{\rm min}$, there exists a non-negative control $u\in\M$, the space of
	Radon measures on $\omega\times(0,T_{\rm min})$, such that the corresponding solution $z$ of \eqref{frac_heat} satisfies $z(x,T) = \widehat{z}(x,T)$ a.e. in $(-1,1)$.
\end{itemize}
\end{theorem}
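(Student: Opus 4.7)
The plan is to establish the three assertions in sequence, following the paper's own outlined structure. For Part (I), I would reduce the constrained controllability to the unconstrained $L^\infty$-statement of Theorem \ref{control_thm_unconstr}. Setting $y := z - \widehat{z}$, the difference satisfies the fractional heat equation \eqref{frac_heat} with initial datum $y_0 = z_0 - \widehat{z}_0 \in L^2(-1,1)$ and control $v := u - \widehat{u}$, and the condition $z(\cdot,T)=\widehat{z}(\cdot,T)$ becomes $y(\cdot,T)=0$. The positivity constraint $u \geq 0$ translates to $v \geq -\widehat{u}$, hence it suffices to exhibit a null-control of the fractional heat equation obeying $\|v\|_{L^\infty(\omega\times(0,T))} \leq \nu$. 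To produce such a small-norm control I would first let $y$ evolve freely on $[0,T/2]$, exploiting the exponential dissipation $\|y(\cdot,T/2)\|_{L^2} \leq e^{-\lambda_1 T/2}\|y_0\|_{L^2}$ (with $\lambda_1$ the first Dirichlet eigenvalue of $\fl{s}{}$), and then apply Theorem \ref{control_thm_unconstr} on $[T/2,T]$ with its fixed cost $C(T/2)$. This yields $\|v\|_{L^\infty} \leq C(T/2)\, e^{-\lambda_1 T/2}\|y_0\|_{L^2}$, which falls below $\nu$ for $T$ large enough, giving $u = v + \widehat{u} \geq 0$. The non-negativity of $z$ when $z_0 \geq 0$ is then a direct consequence of the maximum principle (Lemma \ref{lem}).

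For Part (II) I would argue by contradiction: assume that there exist $T_n \downarrow 0$ and admissible non-negative controls $u_n \in L^\infty(\omega\times(0,T_n))$ steering $z_0$ to $\widehat{z}(\cdot,T_n)$. Let $\varphi_1 > 0$ denote the first Dirichlet eigenfunction of $\fl{s}{}$ on $(-1,1)$, with eigenvalue $\lambda_1>0$, its strict positivity being inherited from that of the fractional heat kernel. Multiplying \eqref{frac_heat} by $e^{-\lambda_1(T_n - t)}\varphi_1(x)$ and integrating yields
\[
\int_{-1}^{1}\widehat{z}(x,T_n)\varphi_1(x)\,dx - e^{-\lambda_1 T_n}\!\!\int_{-1}^{1}z_0(x)\varphi_1(x)\,dx = \int_0^{T_n}\!\!\int_\omega u_n(x,t)\,e^{-\lambda_1(T_n-t)}\varphi_1(x)\,dx\,dt.
\]
Since $u_n,\varphi_1 \geq 0$ and the left-hand side is uniformly bounded as $n\to\infty$, this furnishes a uniform weighted bound on $u_n\varphi_1$, and after localizing to a compact subset of $\omega$ on which $\varphi_1 \geq c_0 > 0$, a uniform $L^1$-bound on $u_n$. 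Combining this bound with the $L^1\to L^2$ smoothing of the fractional heat semigroup (whose operator norm is time-integrable for $s > 1/2$ in one dimension), I deduce $\|z_n(\cdot,T_n) - z_0\|_{L^2} \to 0$. Since $z_n(\cdot,T_n) = \widehat{z}(\cdot,T_n) \to \widehat{z}_0$, this forces $z_0 = \widehat{z}_0$, contradicting the non-triviality of the target and proving $T_{\rm min} > 0$.

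For Part (III) I would take a sequence $T_n \downarrow T_{\rm min}$ together with admissible non-negative $L^\infty$-controls $u_n$ on $\omega\times(0,T_n)$, whose existence is ensured by the definition of $T_{\rm min}$ together with Part (I). The first-eigenfunction argument of Part (II) again delivers a uniform bound $\|u_n\|_{\mathcal{M}(\omega\times(0,T_n))} \leq C$. Extending each $u_n$ by zero outside $\omega\times(0,T_n)$ and extracting a weakly-$\ast$ convergent subsequence, I obtain a non-negative limit $u^\ast \in \M$. To identify $u^\ast$ as an admissible control I would pass to the limit in the transposition formulation of \eqref{frac_heat}: for every smooth solution $\psi$ of the backward adjoint Dirichlet problem on $(-1,1)\times(0,T_{\rm min})$ with prescribed terminal datum, the identity
\[
\int_{-1}^{1}\widehat{z}(x,T_{\rm min})\psi(x,T_{\rm min})\,dx - \int_{-1}^{1}z_0(x)\psi(x,0)\,dx = \int_0^{T_{\rm min}}\!\!\int_\omega \psi(x,t)\,du^\ast(x,t)
\]
is preserved in the limit, so that $u^\ast$ steers $z_0$ to $\widehat{z}(\cdot,T_{\rm min})$ in the sense of transposition.

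The main obstacle I expect lies in Part (II): the plain $\varphi_1$-inequality $\int\widehat{z}_0\varphi_1 \geq \int z_0\varphi_1$ is only a necessary condition and does not by itself preclude $T_{\rm min} = 0$. The real content is the upgrade from the weighted bound $\int\!\!\int u_n\varphi_1 \leq C$ to an honest $L^1$-bound on $u_n$ (requiring localization away from the boundary, where $\varphi_1$ vanishes) and its combination with the precise short-time smoothing of $e^{-t\fl{s}{}}$. It is at this juncture that the hypothesis $s > 1/2$ enters in an essential way, because in dimension one the $L^1 \to L^2$ norm of the semigroup behaves like $t^{-1/(4s)}$ near $t = 0$, and the unconstrained controllability of Theorem \ref{control_thm_unconstr} invoked in Parts (I) and (III) also requires $s > 1/2$. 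A secondary technical point, in Part (III), is to give unambiguous meaning to the solution associated with a measure-valued right-hand side up to the terminal time and to justify the continuity of the terminal trace under weak-$\ast$ convergence of the controls, which is handled through the duality/transposition argument against a sufficiently rich family of smooth adjoint test functions.
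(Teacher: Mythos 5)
Your treatments of (I) and (III) are essentially the paper's own: for (I) the paper also reduces to a null-control $v$ with $\|v\|_{L^\infty}\le\nu$ obtained by combining the dissipation factor $e^{-\lambda_1 T/2}$ with the uniform boundedness of the control cost on $[T/2,T]$, and for (III) it likewise extracts a weak-$\ast$ limit in $\mathcal M$ from a uniform $L^1$ bound (obtained by testing against a positive adjoint solution) and passes to the limit in the transposition identity. The problem is Part (II), where you depart from the paper and where your argument has a genuine gap. The step ``uniform $L^1$ bound on $u_n$ plus time-integrable $L^1\to L^2$ smoothing implies $\|z_n(\cdot,T_n)-z_0\|_{L^2}\to 0$'' does not hold. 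The Duhamel term is estimated by
\begin{align*}
\Big\|\int_0^{T_n}T(T_n-\tau)u_n(\cdot,\tau)\,d\tau\Big\|_{L^2}\le C\int_0^{T_n}(T_n-\tau)^{-1/(4s)}\|u_n(\cdot,\tau)\|_{L^1}\,d\tau,
\end{align*}
and with only the integrated bound $\int_0^{T_n}\|u_n(\cdot,\tau)\|_{L^1}\,d\tau\le M$ (which is all the $\varphi_1$-test gives you) this quantity need not be small: the mass of $u_n$ can concentrate near $\tau=T_n$, where the smoothing kernel is singular. Concretely, $u_n=\tfrac{M}{\varepsilon_n}\chi_{(T_n-\varepsilon_n,T_n)}(t)\chi_{\omega_0}(x)$ with $\varepsilon_n\to0$ has uniformly bounded $L^1$ norm, yet its Duhamel contribution converges in $L^2$ to $M\chi_{\omega_0}\neq 0$. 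This is exactly the scenario one must exclude to prove $T_{\rm min}>0$ (an instantaneous impulse at $t=0^+$ carrying $z_0$ to $\widehat z_0$), so the gap sits at the heart of the claim rather than being a technicality; your own closing remarks identify this as the crux but the smoothing argument you offer does not close it. A secondary issue is that the localization to a compact subset of $\omega$ where $\varphi_1\ge c_0$ only bounds $u_n$ there, while the Duhamel term involves all of $u_n$.

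For comparison, the paper proves (II) by a purely spectral moment argument: writing $z_k(T)=z_k^0e^{-\lambda_kT}+\int_0^Te^{-\lambda_k(T-\tau)}u_k(\tau)\,d\tau$ with $u_k(t)=\int_\omega u\,\phi_k\,dx$, splitting $u_k=u_k^+-u_k^-$ and using $e^{-\lambda_kT}\le e^{-\lambda_k(T-\tau)}\le1$, it derives two-sided bounds on $\int_0^Tu_k^{\pm}$ whose limits as $T\to0^+$ are each equal to $\zeta_k-z_k^0$; combining the two limits and using $\phi_k\rightharpoonup0$ in $L^2(-1,1)$ forces $\zeta_k=z_k^0$ for every $k$, i.e. $z_0=\widehat z(\cdot,T)$, contradicting the standing assumption. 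Note that this argument exploits the sign information on $u$ mode by mode through the exponential weights, which is precisely the information your single-eigenfunction test discards. You would need either to adopt this spectral scheme or to find a genuinely different mechanism ruling out concentration of the control mass at the final time.
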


This result is in the same spirit as the ones obtained in \cite{loheac2017minimal,pighin2018controllability} in the context of the linear and semi-linear local heat equations under the action of a boundary control. Following the methodology presented in the mentioned references, the first ingredient for proving Theorem \ref{control_thm} is to show that, in absence of constraints, \eqref{frac_heat} is controllable by means of an $L^\infty$-control. This will be given by the following.

\begin{theorem}\label{control_thm_unconstr}
For any $z_0\in L^2(-1,1)$, $s>1/2$ and $T>0$, there exists a control function $u\in L^\infty(\omega\times(0,T))$ such that the corresponding unique weak solution $z$ of \eqref{frac_heat} with initial datum $z(x,0)=z_0(x)$ satisfies $z(x,T)=0$ a.e. in $(-1,1)$. Moreover, there is a constant $\C>0$ (depending only on $T$) such that 
\begin{align}\label{control_bound}
	\norm{u}{L^\infty(\omega\times(0,T))}\leq\C\norm{z_0}{L^2(-1,1)}.
\end{align}
\end{theorem}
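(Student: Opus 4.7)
The plan is to reduce the claim to an $L^1$-observability inequality for the adjoint via duality, and then to prove that inequality by the moment method adapted to produce $L^\infty$ biorthogonal functions. Concretely, by the Hahn--Banach theorem and the isometric embedding $L^1 \hookrightarrow (L^\infty)^\ast$, the existence of a control $u\in L^\infty(\omega\times(0,T))$ steering $z_0$ to $0$ with the bound \eqref{control_bound} is equivalent to the dual observability estimate
\begin{equation*}
\norm{\varphi(\cdot,0)}{L^2(-1,1)} \le \C\,\norm{\varphi}{L^1(\omega\times(0,T))},
\end{equation*}
for every solution $\varphi$ of the backward adjoint system $-\varphi_t + \fl{s}{\varphi} = 0$ in $Q$, $\varphi = 0$ in $Q^c$, with terminal datum $\varphi_T\in L^2(-1,1)$. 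So the whole problem reduces to upgrading the $L^2$-observability used in \cite{biccari2017controllability} to an $L^1$-observability on the right-hand side.

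Next, I would exploit the spectral decomposition of $\fl{s}{}$ on $(-1,1)$ with exterior Dirichlet condition: let $\{\phi_k,\lambda_k\}_{k\ge 1}$ be the orthonormal eigenpairs, write $\varphi_T = \sum a_k\phi_k$, and expand $\varphi(x,t) = \sum a_k e^{-\lambda_k(T-t)}\phi_k(x)$. Two facts are available for $s>1/2$: the asymptotics $\lambda_k \sim (k\pi/2)^{2s}$ give a uniform spectral gap $\lambda_{k+1}-\lambda_k \to \infty$, and the eigenfunctions satisfy a uniform non-degeneracy bound $\norm{\phi_k}{L^1(\omega)}\ge \kappa > 0$ (a consequence of unique continuation together with the growth of $\phi_k$). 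Given these ingredients, the $L^1$-observability inequality will follow if I can construct a family $\{q_k\}\subset L^\infty(0,T)$ biorthogonal to the real exponentials, i.e.
\begin{equation*}
\int_0^T e^{-\lambda_j(T-t)}q_k(t)\,dt = \delta_{jk},\qquad \norm{q_k}{L^\infty(0,T)} \le \C\, e^{\C\sqrt{\lambda_k}},
\end{equation*}
since one can then pair $\varphi$ with $q_k\otimes\operatorname{sgn}(\phi_k)\chi_\omega$ to extract each Fourier coefficient $a_k e^{-\lambda_k T}$ from $\norm{\varphi}{L^1(\omega\times(0,T))}$, and the gap $\lambda_{k+1}-\lambda_k \to \infty$ makes the resulting series summable through the strong dissipation factor $e^{-\lambda_k T}$, yielding the bound on $\norm{\varphi(\cdot,0)}{L^2}$.

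The technical heart, and the main obstacle, is the construction of the $L^\infty$ biorthogonal family. The classical Fattorini--Russell procedure naturally delivers $L^2(0,T)$ biorthogonals only, so I would adapt it by multiplying the usual entire generating function (built as an infinite Blaschke/Hadamard product over the $\lambda_k$'s, making use of the gap condition to ensure convergence) by a Paley--Wiener multiplier of Beurling--Malliavin type whose Fourier transform has enough decay to pass from $L^2$ to $L^\infty$ without breaking the $e^{\C\sqrt{\lambda_k}}$ growth. The delicate point is to keep this exponential growth sharp enough to be absorbed by $e^{-\lambda_k T}$ for any $T>0$; this is precisely where $s>1/2$ is needed, since when $s\le 1/2$ the eigenvalue asymptotics $\lambda_k \sim k^{2s}$ no longer leave room for absorbing the biorthogonals' growth. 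Once the family $\{q_k\}$ is in hand, assembling the observability inequality and dualizing back yields the stated $L^\infty$-controllability with the norm estimate \eqref{control_bound}.
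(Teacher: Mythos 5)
Your overall architecture is the right one and matches the paper's: reduce to an $L^1$-observability inequality for the adjoint by duality (the paper does this by invoking \cite[Propositions 2.6--2.7]{micu2012time}), expand in the eigenbasis of $(-d_x^2)_D^s$, and combine a uniform lower bound $\norm{\phi_k}{L^1(\omega)}\geq\beta>0$ with a one-dimensional observability statement for sums of real exponentials. Where you diverge is in that last, scalar ingredient. The paper does not construct biorthogonal families at all: it quotes Schwartz's $L^1$ inequality \eqref{obs2} for Dirichlet series, valid under the gap condition \eqref{spectral_cond1} and the M\"untz condition \eqref{spectral_cond2}, applies it pointwise in $x\in\omega$ with $c_k=a_k\phi_k(x)$, and then integrates over $\omega$ using \eqref{L1_bound_eigen}. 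Your moment-method route is logically coherent --- note that the time-biorthogonality alone kills the cross terms, so pairing with $q_k\otimes\operatorname{sgn}(\phi_k)\chi_\omega$ extracts $a_k\norm{\phi_k}{L^1(\omega)}$ exactly, and the dissipation $e^{-\lambda_kT}$ then absorbs the growth of $\norm{q_k}{L^\infty}$ --- but it front-loads all the difficulty into a construction you only sketch, whereas the paper's citation of \cite{schwartz1943etude} delivers the inequality, together with the uniform boundedness of $\C(T)$ away from $T=0$ (which is what part (I) of Theorem \ref{control_thm} later exploits), in one stroke.

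Two points in your sketch need repair. First, the claimed biorthogonal growth $\norm{q_k}{L^\infty(0,T)}\leq\C e^{\C\sqrt{\lambda_k}}$ is the rate appropriate to the classical Laplacian, where $\lambda_k\sim k^2$; for $\lambda_k\sim(k\pi/2)^{2s}$ the canonical-product estimates are governed by the counting function $N(r)\sim r^{1/(2s)}$, so the natural bound is of order $e^{\C\lambda_k^{1/(2s)}}=e^{\C k}$, not $e^{\C k^{s}}$. Your own closing remark --- that absorption by $e^{-\lambda_kT}$ for every $T>0$ forces $s>1/2$ --- is consistent with the $e^{\C k}$ rate and inconsistent with the $e^{\C\sqrt{\lambda_k}}$ rate you wrote (the latter would be absorbable for every $s>0$), so the stated bound should be corrected before the construction is attempted. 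Second, the uniform lower bound $\norm{\phi_k}{L^1(\omega)}\geq\kappa>0$ does not follow from unique continuation, which only gives positivity for each fixed $k$ with no uniformity as $k\to\infty$. The paper proves this as a separate lemma using the asymptotic approximation of the Dirichlet eigenfunctions by translated half-line eigenfunctions from \cite{kwasnicki2010spectral,kwasnicki2012eigenvalues}, showing $\phi_k$ is, up to an $\mathcal O(\mu_k^{-1-2s})$ error on $\omega$, a shifted sine; this quantitative input is indispensable and cannot be waved away.
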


By means of a classical duality argument (see \cite{fabre1995approximate,fernandez2000null,micu2012time}), Theorem \ref{control_thm_unconstr} is equivalent to the following observability result for the adjoint equation associated to \eqref{frac_heat}.

\begin{theorem}\label{obs_thm}
For any $T>0$ and $p_T\in L^2(-1,1)$, let $p\in L^2((0,T);H_0^s(-1,1))\cap C([0,T];L^2(-1,1))\cap H^1((0,T);H^{-s}(-1,1))$ be the weak solution of the adjoint system 
\begin{align}\label{frac_heat_adj}
	\begin{cases}
		-p_t+\fl{s}{p} = 0 & \mbox{ in }Q,
		\\
		p = 0 & \mbox{ in }Q^c,
		\\
		p(\cdot,T) = p_T(\cdot) & \mbox{ in }(-1,1).
	\end{cases}
\end{align}
Then, for any $s>1/2$, there is a constant $\C=\C(T)>0$ such that 
\begin{align}\label{obs}
	\norm{p(\cdot,0)}{L^2(-1,1)}^2 \leq \C\left(\int_0^T\int_\omega |p(x,t)|\,dxdt\right)^2. 
\end{align}
\end{theorem}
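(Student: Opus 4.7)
The plan is to exploit the spectral structure of the Dirichlet fractional Laplacian on $(-1,1)$ together with a new $L^1$ moment estimate for sums of real exponentials. Let $\{\phi_k\}_{k\ge 1}$ denote the $L^2$-orthonormal eigenbasis of $\fl{s}{}$ with Dirichlet exterior condition, associated with eigenvalues $0<\lambda_1<\lambda_2<\cdots\to+\infty$. Writing $p_T=\sum_k a_k\phi_k$, the solution of \eqref{frac_heat_adj} expands as
\[
p(x,t)=\sum_{k\geq 1} a_k\, e^{-\lambda_k(T-t)}\phi_k(x),
\]
so that orthonormality yields $\|p(\cdot,0)\|_{L^2(-1,1)}^2=\sum_k|a_k|^2 e^{-2\lambda_k T}$, and the task reduces to bounding this series by the square of $\int_0^T\!\int_\omega|p|\,dx\,dt$.

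Next I would invoke the sharp asymptotic $\lambda_k^{1/(2s)}=k\pi/2+O(1)$ known for the 1D fractional Dirichlet spectrum. Since $s>1/2$, this provides a uniform spectral gap $\lambda_{k+1}-\lambda_k\gtrsim k^{2s-1}\to\infty$ as well as the summability $\sum_k\lambda_k^{-1}<\infty$. Combined with a uniform lower bound $\|\phi_k\|_{L^1(\omega)}\ge c_\omega>0$ (of the same flavor as the one underlying the $L^2$-observability of \cite{biccari2017controllability}), these facts ensure that every spectral mode is detectable from $\omega$.

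The central new ingredient is an $L^1$ analogue of the Ingham/Fattorini--Russell estimate: for every finitely supported sequence $\{b_k\}$,
\[
\sum_k|b_k|^2 e^{-2\lambda_k T}\le \C\left(\int_0^T\Bigl|\sum_k b_k\, e^{-\lambda_k(T-t)}\Bigr|\,dt\right)^{\!2}.
\]
I would establish this by constructing, for each $k$, a biorthogonal function $q_k\in L^\infty(0,T)$ satisfying $\int_0^T q_k(t)\,e^{-\lambda_j(T-t)}\,dt=\delta_{jk}$, whose $L^\infty$ norm grows sub-exponentially in $\lambda_k$, typically $\|q_k\|_{L^\infty(0,T)}\le \C(T,\varepsilon)\,e^{\varepsilon \lambda_k^{1/(2s)}}$ for every $\varepsilon>0$. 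Following the Fattorini--Russell scheme, one builds an entire function $J$ of minimal exponential type whose zero set is $\{\lambda_k\}$ (a canonical Weierstrass product, well-defined thanks to $\sum_k\lambda_k^{-1}<\infty$), sets $\widehat{q}_k(z)=J(z)/[(z-\lambda_k)J'(\lambda_k)]$, and recovers $q_k$ via an inverse Laplace transform on a suitable contour. The gap and counting asymptotics furnish Paley--Wiener bounds on $J$ that translate into the desired $L^\infty$ estimate on $q_k$, upgrading the classical $L^2$ output to $L^\infty$.

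Finally I would combine the temporal biorthogonals $\{q_k\}$ with a spatial biorthogonalization of the family $\{\phi_j|_\omega\}_{j\le N}$ (necessary because the eigenfunctions are not orthogonal on $\omega$) to extract each coefficient in the form $|a_k|\le C_k\int_0^T\!\int_\omega|p|\,dx\,dt$; squaring, weighting by $e^{-2\lambda_k T}$, and summing then yields \eqref{obs}, the decay $e^{-2\lambda_k T}$ absorbing the sub-exponential growth of $C_k$. The main obstacle will be precisely the $L^\infty$ bound on the biorthogonal family: the classical Fattorini--Russell construction produces biorthogonals only in $L^2(0,T)$, and upgrading them to $L^\infty$ while keeping the growth rate under control requires sharper complex-analytic estimates on the Weierstrass product $J$ and on the inversion contour used to recover $q_k$.
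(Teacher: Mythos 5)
Your overall architecture (eigenfunction expansion, reduction to the spectral inequality, gap and summability of the $\lambda_k$ for $s>1/2$, a uniform lower bound on $\|\phi_k\|_{L^1(\omega)}$, and an $L^1$ moment inequality for real exponentials) matches the paper's, and your Fattorini--Russell/Weierstrass-product route to the temporal $L^1$ estimate is a legitimate, if more laborious, substitute for the paper's direct citation of Schwartz's results. The genuine gap is in your final assembly step: the ``spatial biorthogonalization of the family $\{\phi_j|_\omega\}_{j\le N}$''. Extracting each $a_k$ from data on $\omega$ by biorthogonalizing the restricted eigenfunctions requires quantitative lower bounds on $\bigl\|\sum_j c_j\phi_j\bigr\|_{L^1(\omega)}$ for linear combinations, i.e.\ a spectral inequality of Lebeau--Robbiano type. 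For the classical Laplacian this comes from Carleman/doubling estimates and yields constants $C_k\sim e^{C\sqrt{\lambda_k}}$; for the nonlocal Dirichlet fractional Laplacian on a bounded interval no such estimate is known (the paper emphasizes precisely that Carleman estimates are unavailable here), so your claim that the $C_k$ grow sub-exponentially and are ``absorbed'' by $e^{-2\lambda_k T}$ is unsupported. The pointwise bound $\|\phi_k\|_{L^1(\omega)}\ge\beta$ controls each mode individually but says nothing about cancellations among modes on $\omega$, which is exactly what a biorthogonalization must quantify.

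The paper sidesteps this entirely with a simple but essential trick you are missing: it never separates the spatial and temporal variables by biorthogonality. Instead, for each \emph{fixed} $x\in\omega$ it applies the scalar $L^1$ exponential-sum inequality \eqref{obs2} to the sequence $c_k:=a_k\phi_k(x)$, obtaining
\begin{align*}
\sum_{k\geq 1}|a_k\phi_k(x)|e^{-\lambda_k T}\le \C(T)\int_0^T\bigg|\sum_{k\geq 1}a_k\phi_k(x)e^{-\lambda_k t}\bigg|\,dt,
\end{align*}
and then integrates this over $\omega$. The left-hand side becomes $\sum_k|a_k|e^{-\lambda_k T}\|\phi_k\|_{L^1(\omega)}\ge\beta\sum_k|a_k|e^{-\lambda_k T}$, which dominates $\bigl(\sum_k|a_k|^2e^{-2\lambda_k T}\bigr)^{1/2}$, while the right-hand side is exactly the observed quantity. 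This uses only the individual lower bounds \eqref{L1_bound_eigen} and requires no control whatsoever on linear combinations of eigenfunctions restricted to $\omega$. If you replace your last paragraph by this fix--and--integrate argument, your proof closes; as written, the spatial biorthogonalization step would fail for lack of any available quantitative unique continuation estimate in the fractional setting.
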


We refer to Section \ref{preliminary_sec} for the definition of the spaces $H_0^s(-1,1)$ and $H^{-s}(-1,1)$.

We shall prove Theorem \ref{obs_thm} by employing spectral techniques and with the help of the following $L^1$-observability result for linear combinations of real exponentials.

\begin{theorem}\label{L1_obs_thm}
Let $\{\mu_k\}_{k\geq 1}$ be a sequence of real numbers satisfying the following conditions:
\begin{subequations}
	\begin{align}
		&\textrm{1. There exists } \gamma>0 \textrm{ such that } \mu_{k+1}-\mu_k\geq \gamma \textrm{ for all } k\geq 1.\label{spectral_cond1}
		\\
		&\textrm{2. }\sum_{k\geq 1}\frac{1}{\mu_k}<+\infty. \label{spectral_cond2}
	\end{align}
\end{subequations}

Then, for any $T>0$, there is a constant $\mathcal C(T)>0$ (depending only on $T$) such that, for any sequence $\{c_k\}_{k\geq 1}$ it holds the inequality
\begin{align}\label{obs2}
	\sum_{k\geq 1}|c_k|e^{-\mu_k T} \leq \C(T)\, \norm{\sum_{k\geq 1} c_k e^{-\mu_k t}}{L^1(0,T)}.
\end{align} 

Moreover, the function $\C(T)$ is uniformly bounded away from $T=0$ and blows-up exponentially as $T\downarrow 0^+$.
\end{theorem}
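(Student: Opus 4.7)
My plan is to prove \eqref{obs2} by a duality argument that reduces it to a moment problem in $L^\infty(0,T)$, and then to solve the moment problem via a biorthogonal family to the exponentials $\{e^{-\mu_k t}\}$. Since $(L^1)^* = L^\infty$, and the dual of the weighted $\ell^1$-space equipped with $\|c\|_\star = \sum_k |c_k| e^{-\mu_k T}$ is the weighted $\ell^\infty$-space with norm $\sup_k |d_k| e^{\mu_k T}$, the inequality \eqref{obs2} is equivalent to the following surjectivity statement: for every sequence $(d_k)$ with $|d_k| \leq e^{-\mu_k T}$, there exists $u \in L^\infty(0,T)$ with $\|u\|_{L^\infty(0,T)} \leq \C(T)$ such that
\[
\int_0^T u(t)\, e^{-\mu_k t}\,dt = d_k, \qquad k \geq 1.
\]
Indeed, pairing such a $u$ with $f(t) = \sum_k c_k e^{-\mu_k t}$ and optimizing over the sign choices $d_k = \pm e^{-\mu_k T}$ yields \eqref{obs2} with the same constant.

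The heart of the proof is then the construction of a biorthogonal family $\{\psi_k\}_{k \geq 1} \subset L^\infty(0,T)$ to $\{e^{-\mu_k t}\}$, i.e., $\int_0^T \psi_k(t) e^{-\mu_j t}\,dt = \delta_{kj}$, satisfying a quantitative bound
\[
\|\psi_k\|_{L^\infty(0,T)} \leq M(T)\, e^{\varepsilon \mu_k}
\]
for any prescribed $\varepsilon \in (0,T)$, where $M(T)$ is uniformly bounded on $[T_0,\infty)$ for every $T_0>0$ and grows at most exponentially as $T \downarrow 0^+$. The existence of such a family is guaranteed by the Muntz-type summability condition \eqref{spectral_cond2}, while the gap condition \eqref{spectral_cond1} provides the quantitative control on its growth. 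I would rely on the classical Fattorini--Russell construction, in which $\psi_k$ is realised as an inverse Laplace transform of an entire function of exponential type vanishing at $-\mu_j$ for $j \neq k$; the $L^\infty$-bound is obtained either from the Paley--Wiener description of that entire function, or by first producing an $L^2(0,T-\delta)$-biorthogonal family (which is standard) and then transferring the estimate to $(0,T)$ by a time-shift, whose cost $e^{-\delta \mu_k}$ is absorbed into the $\varepsilon \mu_k$ exponent.

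With the family in hand, setting $u(t) := \sum_k d_k \psi_k(t)$ solves the moment problem by biorthogonality, and term-by-term estimation gives
\[
\|u\|_{L^\infty(0,T)} \leq M(T) \sum_{k \geq 1} e^{-(T-\varepsilon)\mu_k},
\]
which, thanks to the gap bound $\mu_k \geq \mu_1 + \gamma(k-1)$, converges for any $\varepsilon < T$; picking $\varepsilon = T/2$ yields $\C(T) \leq M(T)\,(1 - e^{-\gamma T/2})^{-1}$, which inherits the announced behaviour in $T$. Reversing the duality of the first step then concludes the proof. The principal obstacle I expect is precisely the \emph{$L^\infty$}-control of $\psi_k$: the classical theory of biorthogonal families to real exponentials is framed in $L^2(0,T)$, so the delicate technical point is passing from $L^2$ to $L^\infty$ while retaining both an exponential-type bound in $k$ and the correct asymptotic dependence of $M(T)$ as $T \to 0^+$.
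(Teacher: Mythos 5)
Your strategy is genuinely different from the paper's. The paper proves \eqref{obs2} in one line by quoting Schwartz's \emph{\'Etude des sommes d'exponentielles r\'eelles} (Section 9, Eq.~(9.a)), which already contains exactly this $L^1$ lower bound for sums of real exponentials under the gap and summability hypotheses \eqref{spectral_cond1}--\eqref{spectral_cond2}, together with the behaviour of the constant as $T\downarrow 0^+$. You instead propose a self-contained constructive proof: reduce \eqref{obs2} to the moment problem $\int_0^T u(t)e^{-\mu_k t}\,dt=d_k$ with $|d_k|\le e^{-\mu_k T}$ and $\|u\|_{L^\infty(0,T)}\le\C(T)$, and solve it with an $L^\infty$-biorthogonal family. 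The reduction is sound (only the easy direction of the duality is needed: pair $u$ with $F$ and take $d_k=\mathrm{sign}(c_k)e^{-\mu_k T}$; for infinite sums one should argue on finite sections first and pass to the limit), and the final summation using $\mu_k\ge\mu_1+\gamma(k-1)$ is fine. If completed, this route would yield an explicit quantitative proof where the paper offers only a citation, which would be a genuine gain.

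However, the decisive step --- the bound $\|\psi_k\|_{L^\infty(0,T)}\le M(T)e^{\varepsilon\mu_k}$ --- is left open, and one of your two proposed fixes fails. The classical Fattorini--Russell/M\"untz constructions produce biorthogonal families with \emph{$L^2$} bounds; a time-shift from $(0,T-\delta)$ to $(0,T)$ only multiplies the $j$-th moment by $e^{-\mu_j\delta}$ and renormalises, but it does not improve integrability: the shifted function is still merely $L^2$, so no $L^\infty$ bound comes out of it. The Paley--Wiener alternative can work, but it requires the generating entire function $\Psi_k$ to be integrable on the real axis (since $\|\psi_k\|_{L^\infty}\lesssim\|\Psi_k\|_{L^1(\RR)}$), which the standard multiplier does not provide without inserting an extra factor of small exponential type and polynomial decay, whose cost in $k$ and in $T$ must then be tracked. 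The cleanest repair is the convolution trick: take two $L^2(0,T/2)$-biorthogonal families $\psi_k^{(1)},\psi_k^{(2)}$ and set $\psi_k:=\psi_k^{(1)}\ast\psi_k^{(2)}$, which is supported in $(0,T)$, satisfies $\int_0^T\psi_k(t)e^{-\mu_j t}\,dt=\delta_{kj}\cdot\delta_{kj}=\delta_{kj}$, and obeys $\|\psi_k\|_{L^\infty}\le\|\psi_k^{(1)}\|_{L^2}\|\psi_k^{(2)}\|_{L^2}$. Until one of these routes is carried out with explicit constants, both the main inequality and the claimed exponential blow-up of $\C(T)$ as $T\downarrow 0^+$ remain unproved, so as written the argument has a genuine gap at its core.
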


Theorem \ref{L1_obs_thm} will follow from the classical M\"untz Theorem for families of real exponentials and from the results of \cite{schwartz1943etude}. 

We stress that Theorem \ref{L1_obs_thm}, of independent interest on its own, is not specific to the equation \eqref{frac_heat} we are considering but it allows for more general results. Indeed, it yields the immediate knowledge of $L^\infty$-controls for one-dimensional problems simply by knowing the explicit spectrum of the equation. 

We remark that the fact that $\C(T)$ blows-up exponentially as $T\downarrow 0^+$ will imply the same for the observability constant in the estimate \eqref{obs}. This is in agreement with the well-known behavior for heat-like processes in the classical case $s=1$, as it has been shown, for instance, in \cite{fernandez2000null}.

\section{Preliminary results}\label{preliminary_sec}
We present here some preliminary results which are needed for the proof of Theorem \ref{control_thm}. We start by introducing the appropriate function spaces needed to study our problem. For any $s\in (0,1)$ and an arbitrary open set $\Omega\subset\mathbb R$, we denote by

\begin{equation*}
	H^{s}(\Omega):=\left\{v\in L^2(\Omega):\;\int_\Omega\int_\Omega
\frac{|v(x)-v(y)|^{2}}{|x-y|^{1+2s}}dxdy<+\infty \right\}
\end{equation*}
the fractional order Sobolev space endowed with the norm
\begin{equation*}
	\norm{v}{H^s(\Omega)}:=\left( \int_\Omega|v|^2\;dx+\int_\Omega\int_\Omega\frac{|v(x)-v(y)|^2}{|x-y|^{1+2s}}
dxdy\right) ^{\frac 12},
\end{equation*}
and we let 
\begin{align*}
	H_0^s(-1,1):=\Big\{v\in H^s(\RR):\; v=0\;\mbox{ on }\;\RR\setminus(-1,1)\Big\}.
\end{align*}

Moreover, we let $H^{-s}(-1,1):=(H_0^s(-1,1))^\star$ be the dual space of $H_0^s(-1,1)$ with respect to the pivot space $L^2(-1,1)$.  Then, we have the following continuous embeddings: $H_0^s(-1,1)\hookrightarrow L^2(-1,1)\hookrightarrow H^{-s}(-1,1)$. Finally, we denote by $H^{s}_{\rm loc}(-1,1)$ the space defined by
\begin{align*}
	H^{s}_{\rm loc}(-1,1)=\Big\{v\in L_{\rm loc}^2(-1,1):\;v\varphi\in H^{s}(-1,1)\;\mbox{ for all }\;\varphi\in\mathcal D(\Omega)\Big\}.
\end{align*}

If $s\ge 1$, then the above spaces are defined as in \cite{BWZ1} and their references.
For more details on fractional order Sobolev spaces we refer to \cite{di2012hitchhiker,War} and their references. 

Next we introduce our notion of solutions to the system \eqref{frac_heat}.
\begin{definition}
We shall say that a function 
$$z\in C([0,T];L^2(-1,1))\cap L^2((0,T);H_0^s(-1,1))\cap H^1((0,T); H^{-s}(-1,1))$$ 
 is a weak solution of the system \eqref{frac_heat} if $z(0,\cdot)=z_0$ a.e. in $(-1,1)$ and the equality
\begin{align*}
	\langle z_t(t,\cdot),v\rangle_{H^{-s}(-1,1),H_0^s(-1,1)} & +\frac{c_s}{2}\int_{\RR}\int_{\RR}\frac{(z(t,x)-z(t,y))(v(x)-v(y))}{|x-y|^{1+2s}}\;dxdy
	\\
	&=\langle u(t,\cdot),v\rangle_{H^{-s}(-1,1),H_0^s(-1,1)}
\end{align*}
holds for every $v\in H_0^s(-1,1)$ and a.e. $t\in (0,T)$.
\end{definition}

We recall that according to \cite[Theorem 26]{leonori2015basic}, for any $u\in L^2((0,T);H^{-s}(-1,1))$ and $z_0\in L^2(-1,1)$, the system \eqref{frac_heat} admits a unique weak solution $z$. 

Moreover, if $u\in L^2(\omega\times(0,T))$ and $z_0\equiv 0$, then it has been shown in \cite[Theorem 1.5]{biccari2018local}  that 
\begin{align*}
	z\in L^2((0,T);H^{2s}_{\rm loc}(-1,1))\cap L^{\infty}((0,T);H^s_0(-1,1))\cap H^1((0,T);L^2(-1,1)).
\end{align*}

Furthermore, as we have mentioned above, the fractional heat equation preserves positivity, meaning that, if $u$ is non-negative and $z_0$ is also non-negative, then the unique solution $z$ of the system \eqref{frac_heat} is also non-negative. Such a result has been stated in \cite[Theorem 26]{leonori2015basic} but without giving a proof. For the sake of completeness we include the full proof here. We state our result in the case $N=1$ but the same holds for $N\ge 1$ without any modification of the proof.

\begin{lemma}\label{lem}
Let $u\in L^2(\omega\times(0,T))$ and  $z_0\in L^2(-1,1)$ be non-negative. Then, the corresponding unique solution $z$ of the system \eqref{frac_heat} is also non-negative.
\end{lemma}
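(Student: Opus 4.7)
The plan is to use the Stampacchia truncation method, testing the weak formulation against the negative part of the solution. Let me denote, for a function $w$, the negative part $w^- := \max(-w,0) \geq 0$, so that $w = w^+ - w^-$ with the usual positive part $w^+$.

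First I would verify that $v(t,\cdot) := -z^-(t,\cdot)$ is an admissible test function, i.e. belongs to $L^2((0,T);H_0^s(-1,1))$. Since truncation by a Lipschitz function with $G(0)=0$ preserves $H^s$-regularity, and since $z(t,\cdot)=0$ on $\mathbb R\setminus(-1,1)$ implies $z^-(t,\cdot)=0$ there as well, this inclusion holds, with the expected norm estimate. Plugging $v=-z^-$ into the weak formulation and using the standard chain rule for the time derivative in the Lions–Magenes setting, which yields
\begin{align*}
\langle z_t(t,\cdot), -z^-(t,\cdot)\rangle_{H^{-s}(-1,1),H_0^s(-1,1)} = \frac{1}{2}\frac{d}{dt}\norm{z^-(t,\cdot)}{L^2(-1,1)}^2,
\end{align*}
I obtain an identity for $\frac{d}{dt}\|z^-\|_{L^2}^2$ in terms of the bilinear form and $\int u\,z^-$.

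The key pointwise inequality is the following: decomposing $z=z^+-z^-$ and using that $z^+$ and $z^-$ have disjoint supports, one checks that
\begin{align*}
(z(x)-z(y))(z^-(x)-z^-(y)) \leq -(z^-(x)-z^-(y))^2, \qquad x,y\in\mathbb R.
\end{align*}
Integrating this against the singular kernel $|x-y|^{-1-2s}$ shows that the bilinear term contributes a non-negative quantity. Combined with $u\geq 0$ and $z^-\geq 0$, which force $\int_\omega u\,z^-\,dx\geq 0$, the weak formulation yields
\begin{align*}
\frac{1}{2}\frac{d}{dt}\norm{z^-(t,\cdot)}{L^2(-1,1)}^2 + \frac{c_s}{2}\int_{\mathbb R}\int_{\mathbb R}\frac{(z^-(x,t)-z^-(y,t))^2}{|x-y|^{1+2s}}\,dxdy \leq 0.
\end{align*}

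Finally, since $z_0\geq 0$ gives $z^-(\cdot,0)\equiv 0$, integrating this differential inequality in time yields $\|z^-(t,\cdot)\|_{L^2(-1,1)}=0$ for every $t\in[0,T]$, whence $z\geq 0$ a.e.\ in $Q$, as required.

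The main obstacle I expect is the rigorous justification of the chain rule step, namely that the Stampacchia truncation $z\mapsto z^-$ is admissible as a test function and that the integration-by-parts in time identity holds for the weak solution class stated in the lemma. This is standard for the local heat equation, but in the nonlocal setting one must confirm that truncation by $G(r)=-\min(r,0)$ maps $L^2((0,T);H_0^s(-1,1))\cap H^1((0,T);H^{-s}(-1,1))$ into a function with the same regularity, so that the duality pairing makes sense and the usual formula for $\langle z_t,z^-\rangle$ applies; this follows from an approximation argument (either regularizing $z$ by mollification in time and passing to the limit, or approximating $G$ by smooth convex functions) together with the continuity of Lipschitz truncations on $H_0^s$.
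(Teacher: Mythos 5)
Your proof is correct, but it follows a genuinely different route from the paper. The paper does not run a parabolic truncation argument at all: it first proves that the realization $\fl{s}_D$ of the fractional Laplacian with Dirichlet exterior condition is a \emph{resolvent positive} operator, by testing the elliptic resolvent equation $(\lambda+\fl{s}_D)\phi=f$ (with $f\le 0$) against $\phi^+$ and using the pointwise inequality $\bigl(\phi^-(x)-\phi^-(y)\bigr)\bigl(\phi^+(x)-\phi^+(y)\bigr)\le 0$ to get $\mathcal E(\phi,\phi^+)\ge 0$; positivity of the evolution is then obtained by invoking an abstract result on Cauchy problems governed by resolvent positive operators, together with the Duhamel representation through the submarkovian semigroup. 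Your argument instead works directly on the time-dependent weak formulation, testing with $-z^-$ and exploiting essentially the same algebraic fact, namely that the nonlocal bilinear form of $z^+$ against $z^-$ is non-positive, which gives your inequality $(z(x)-z(y))(z^-(x)-z^-(y))\le -(z^-(x)-z^-(y))^2$ and hence a Gr\"onwall-type decay of $\|z^-(t,\cdot)\|_{L^2}^2$ from $z^-(\cdot,0)=0$. The trade-off is clear: the paper's route confines the hands-on computation to the stationary problem and outsources the parabolic step to abstract semigroup theory, while yours is self-contained and elementary but must justify that the Lipschitz truncation $z\mapsto z^-$ is an admissible test function and that the chain rule $\langle z_t,-z^-\rangle=\tfrac12\tfrac{d}{dt}\|z^-\|_{L^2}^2$ holds in the $H_0^s$--$H^{-s}$ duality setting; you correctly identify this as the delicate point and the approximation argument you sketch (mollification in time, or smoothing of the truncation) is the standard and adequate way to close it. Note also that your remark that $z^-(t,\cdot)$ vanishes on $\RR\setminus(-1,1)$, so that it indeed lies in $H_0^s(-1,1)$, is needed and correctly included.
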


\begin{proof}
Denote by $\fl{s}_D$ the realization of $\fl{s}$ in $L^2(-1,1)$ with the Dirichlet exterior condition $u=0$ in $(-1,1)^c$. Then, $\fl{s}_D$ is the self-adjoint operator in $L^2(-1,1)$ associated with the bilinear, symmetric and closed form $\mathcal E:H_0^s(-1,1)\times H_0^s(-1,1)\to\mathbb R$  and given by
\begin{align*}
	\mathcal E(\varphi,\psi):=\frac{c_s}{2}\int_{\RR}\int_{\RR}\frac{(\varphi(x)-\varphi(y))(\psi(x)-\psi(y))}{|x-y|^{1+2s}}\;dxdy,\;\;\varphi,\psi\in H_0^s(-1,1).
\end{align*}

We claim that  $\fl{s}_D$ is a resolvent positive operator. Indeed, let $\lambda>0$ be a real number, $f\in L^2(-1,1)$ and set 
\begin{align*}
	\phi:=\Big(\lambda +\fl{s}_D\Big)^{-1}f.
\end{align*}
Then, $\phi$ belongs to $H_0^s(-1,1)$ and is a weak solution of the Dirichlet problem
\begin{equation*}
	\begin{cases}
		\fl{s}\phi+\lambda \phi=f\;\;&\mbox{ in }\; (-1,1),
		\\
		\phi=0&\mbox{ in }\; (-1,1)^c,
	\end{cases}
\end{equation*}
in the sense that
\begin{align}\label{29}
	\mathcal E(\phi,v)+\lambda\int_{-1}^1\phi v\;dx=\int_{-1}^1fv\;dx,\;\;\forall\; v\in H_0^s(-1,1).
\end{align}
It is clear that that there is a constant $\C>0$ such that
\begin{align}\label{28}
	\lambda\int_{-1}^1|v|^2\;dx+\mathcal E(v,v)\ge \C\|v\|_{H_0^s(-1,1)}^2
\end{align}
for all $v\in H_0^{s}(-1,1)$. 

Now, assume that $f\le 0$ a.e. in $(-1,1)$ and let $\phi^+ :=\max\{\phi,0\}$. It follows from  \cite{War} that $\phi^+\in  H_0^s(-1,1)$. Let $\phi^- :=\max\{-\phi,0\}$. Since
\begin{align*}
	\Big(\phi^-(x)-\phi^-(y)\Big)\Big(\phi^+(x)-\phi^+(y)\Big) = &\,\phi^-(x)\phi^+(x)-\phi^-(x)\phi^+(y)
	\\
	& -\phi^-(y)\phi^+(x)+\phi^-(y)\phi^+(y)
	\\
	= &\,-\Big(\phi^-(x)\phi^+(y)+\phi^-(y)\phi^+(x)\Big)\le 0,
\end{align*}
we have that $\mathcal E(\phi^-,\phi^+)\le 0$. Hence,
\begin{align*}
	\mathcal E(\phi,\phi^+)=\mathcal E(\phi^+-\phi^-,\phi^+)=\mathcal E(\phi^+,\phi^+)-\mathcal E(\phi^-,\phi^+)\ge 0.
\end{align*}
Then, using \eqref{29} with $v=\phi^+$ we get that
\begin{align*}
	0\le \lambda\int_{-1}^1\phi\phi^+\;dx+\mathcal E(\phi,\phi^+)=\int_{-1}^1f\phi^+\;dx\le 0.
\end{align*}

By \eqref{28}, the preceding estimate implies that $\phi^+=0$ a.e. in $(-1,1)$, that is, $\phi\le 0$ a.e. in $(-1,1)$. We have shown that the resolvent $(\lambda +\fl{s}_D)^{-1}$ is a positive operator. Now it follows from the corresponding result on abstract Cauchy problems (see e.g. \cite[Theorem 3.11.11]{ABHN}) that,  if $u\ge 0$ and $z_0\ge 0$, then the unique solution $z$ of \eqref{frac_heat} is also non-negative.  We notice that this can be also seen from the representation of the solution $z$. More precisely, we have that the solution $z$ of \eqref{frac_heat} is given for a.e. $x\in (-1,1)$ and a.e. $t>0$ by
\begin{align}\label{U0}
z(x,t)=(T(t)z_0)(x)+\int_0^tT(t-\tau)u(x,\tau)\;d\tau,
\end{align}
where $(T(t))_{t\ge 0}$ is the submarkovian (positivity-preserving and $L^\infty$-contractive) semigroup on $L^2(-1,1)$ generated by $-(-d_x^2)_D^s$.  The proof is finished.
\end{proof}

\begin{remark}
{\em 
We mention the following facts.
\begin{enumerate}
	\item Firstly, we notice that the operator $(-d_x^2)_D^s$ defined above is different from the Dirichlet spectral fractional Laplace operator, that is, the fractional $s$-power of the realization in $L^2(-1,1)$ of the Laplace operator $-d_x^2$ with the zero Dirichlet boundary condition. That is, the operator defined by
	\begin{align*}
		(-d_x^2)_Sv: = \sum_{k\geq 1} \langle v,e_k\rangle_{L^2(-1,1)} \eta_k^se_k,
	\end{align*}
	with $e_k$ and $\eta_k$ being the eigenfunctions and eigenvalues of the Dirichlet Laplacian on $(-1,1)$, respectively. 
 	The eigenvalues and eigenfunctions of $(-d_x^2)_D^s$ and $(-d_x^2)_S$ are different. More precisely, the latter operator has the same eigenfunctions as the Dirichlet Laplacian which are smooth functions (by using elliptic regularity), but for the first one, its eigenfunctions are even not Lipschitz continuous on $[-1,1]$. We refer to \cite{BWZ1,Val} for the comparison of the eigenvalues and for more details on this topic. 
	\item Secondly, the techniques we use in this paper would also apply for the fractional heat equation involving $(-d_x^2)_S$. Actually, the proof of the constrained controllability for that equation would still be based on the $L^1$-Ingham-like estimate \eqref{obs2} and would use the same arguments we shall use in Section \ref{pr-main-rel}.
 \end{enumerate}}
 \end{remark}

We can now prove Theorem \ref{control_thm_unconstr} ensuring that, without imposing any constraint on the control, it is possible to obtain the null-controllability of \eqref{frac_heat} by means of a control function $u\in L^\infty(\omega\times(0,T))$. 

To this end, we shall first give the proof of Theorem \ref{L1_obs_thm}, and then use this result to obtain the observability inequality \eqref{obs}.

\begin{proof}[\bf Proof of Theorem \ref{L1_obs_thm}]
The proof of \eqref{obs2} is a direct consequence of the results presented in \cite{schwartz1943etude}. Let us define the function $F: [0,T]\to\mathbb R$ by
\begin{align}\label{F}
	F(t):=\sum_{k\geq 1} c_k e^{-\mu_k t}.
\end{align}
	
According to \cite[Section 9, page 30, Equation (9.a)]{schwartz1943etude}, under the hypothesis \eqref{spectral_cond1} and \eqref{spectral_cond2} the following estimates hold:
\begin{align}\label{est_9a_schwartz}
	\sum_{k\geq 1} |c_k|e^{-\mu_kT}\leq \C(T)\norm{F}{L^1(0,T)},
\end{align}
with $\C(T)$ a positive constant depending only on $T$ and uniformly bounded for all $T\geq\varepsilon>0$. Then, \eqref{obs2} immediately follows by applying \eqref{est_9a_schwartz} to the function $F$ defined in \eqref{F}. Moreover, by using \cite[Section 9, page 32, Equation (9.h')]{schwartz1943etude} and the lower bound of $\C(T)$ given in \cite[Section 9, page 36]{schwartz1943etude}, we can readily check that the constant $\C(T)$ blows-up exponentially as $T\downarrow 0^+$. The proof is finished.
\end{proof}

We can now employ \eqref{obs2} to prove Theorem \ref{obs_thm}. To this end, we will first need the following technical result.

\begin{lemma}
Consider the eigenvalues problem for the Dirichlet fractional Laplacian in $(-1,1)$:
\begin{align}\label{fl_eigenv}
	\begin{cases}
		\fl{s}{\phi_k} = \lambda_k\phi_k, & \mbox{ in } (-1,1)
		\\
		\phi_k = 0, & \mbox{ in } (-1,1)^c.
	\end{cases}
\end{align}

That is, $\{\phi_k\}_{k\in\NN}$ is the orthonormal basis of eigenfunctions of the operator $\fl{s}_D$ defined in the proof of Lemma \ref{lem} with associated eigenvalues $\{\lambda_k\}_{k\in\NN}$.Then for any open set $\omega\subset (-1,1)$, there is a constant $\beta>0$ (independent of $k$) such that 
\begin{align}\label{L1_bound_eigen}
	\norm{\phi_k}{L^1(\omega)}\geq\beta >0.
\end{align}
\end{lemma}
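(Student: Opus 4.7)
The plan is to combine a uniform $L^\infty$ bound on the Dirichlet eigenfunctions of $\fl{s}{}$ with a dichotomy between small and large indices $k$. First, I would invoke the known uniform bound $\norm{\phi_k}{L^\infty(-1,1)}\leq M$, with $M$ independent of $k$, valid for the $L^2$-normalized eigenfunctions of the fractional Laplacian on the interval (see the spectral analysis of Kwa\'snicki). Since $\norm{\phi_k}{L^2(-1,1)}=1$, this immediately gives $\norm{\phi_k}{L^1(-1,1)}\geq 1/M$; but this global bound is weaker than the localized one needed, and the work consists in transferring it to the prescribed subset $\omega$.

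For the large-$k$ regime I would rely on the precise asymptotic description of the eigenfunctions available in the literature, which expresses $\phi_k$ as an explicit oscillating profile $q_k(x)=\sqrt{2}\sin(\mu_k(x+1)-\theta_s)$, with frequency $\mu_k\sim k\pi/2$, plus a remainder $R_k$ with $\norm{R_k}{L^\infty(-1,1)}\to 0$ as $k\to\infty$. A standard equidistribution / Riemann--Lebesgue argument then shows that $\norm{q_k}{L^1(\omega)}$ has a strictly positive limit (proportional to $|\omega|$). Combined with the smallness of $R_k$, this yields $k_0\in\NN$ and $\beta_1>0$ such that $\norm{\phi_k}{L^1(\omega)}\geq\beta_1$ for every $k\geq k_0$.

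For the finitely many $k<k_0$ I would conclude by strong unique continuation. Each such $\phi_k$ is a nontrivial solution of $\fl{s}{\phi_k}=\lambda_k\phi_k$ in $(-1,1)$ vanishing in $(-1,1)^c$; if it also vanished on the open set $\omega\subset(-1,1)$, the extended function would be zero on the open set $\omega\cup(-1,1)^c\subset\RR$ of positive measure, and the strong unique continuation property for $\fl{s}{}$ (see e.g. Fall--Felli, R\"uland) would force $\phi_k\equiv 0$, contradicting $\norm{\phi_k}{L^2(-1,1)}=1$. Hence $\norm{\phi_k}{L^1(\omega)}>0$ for every such $k$, and since these are only finitely many, $\beta_2:=\min_{k<k_0}\norm{\phi_k}{L^1(\omega)}>0$. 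Taking $\beta:=\min(\beta_1,\beta_2)>0$ gives the claim.

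The main obstacle is quantifying the remainder in the asymptotic expansion uniformly in $x\in(-1,1)$, especially when $\omega$ approaches the endpoints $\pm 1$, where the eigenfunctions are only $C^s$. For strictly interior $\omega$ the interior smoothness of $\phi_k$ makes the equidistribution step routine; for $\omega$ touching the boundary one must incorporate the explicit boundary profile $\phi_k(x)\sim(1\mp x)^s$ contained in Kwa\'snicki's formula and estimate the contribution of the boundary layer separately. An alternative, less quantitative route would bypass the asymptotic formula entirely by a contradiction/compactness argument combining the uniform $L^\infty$-bound with semiclassical-type averaging, but invoking the explicit expansion appears to be the cleanest path.
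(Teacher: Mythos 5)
Your proposal is correct in substance and its core — the large-$k$ regime — follows the same route as the paper: both arguments rest on Kwa\'snicki's asymptotic description of the Dirichlet eigenfunctions as a sinusoidal profile of frequency $\mu_k\sim k\pi/2$ plus a correction that is uniformly small on $\omega$, followed by the observation that $\int_\omega|\sin(\mu_k(1+x)+\theta_s)|\,dx$ stays bounded away from zero (an equidistribution/Riemann--Lebesgue fact). The genuine difference is in the low modes. The paper works throughout with the explicit approximate eigenfunctions $\varrho_k$ and disposes of the finitely many small indices by noting that each $\int_\omega|\varrho_k|\,dx>0$ because the explicit integrand vanishes only on a null set; this leaves somewhat implicit the transfer from $\varrho_k$ back to the true eigenfunctions $\phi_k$ for those small $k$, where the $O(1/k)$ closeness is of no use. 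You instead apply strong unique continuation for $\fl{s}{}$ directly to the true $\phi_k$: if $\phi_k$ vanished on $\omega$ then $\fl{s}{\phi_k}=\lambda_k\phi_k=0$ there as well, forcing $\phi_k\equiv 0$. This is arguably the more robust and self-contained way to handle the finitely many exceptional indices, at the price of importing a nontrivial external result (Fall--Felli, R\"uland, Ghosh--Salo--Uhlmann). Two small remarks: your opening uniform-$L^\infty$/global-$L^1$ observation is never used and can be dropped; and the worry about $\omega$ touching $\pm 1$ is moot, since one may always replace $\omega$ by a compactly contained subinterval, which only decreases the $L^1(\omega)$ norm — this is also how the boundary-layer issue is implicitly avoided in the paper.
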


\begin{proof}
The proof is based on the asymptotic results on the spectrum of the fractional Laplacian contained in  \cite{kwasnicki2010spectral,kwasnicki2012eigenvalues}. Let us introduce the following auxiliary function $q:\mathbb R\to\mathbb R$ defined by 
\begin{align}\label{q_def}
	q(x) := \begin{cases}
		0 & x\in \left(-\infty,-\frac 13\right),
		\\[7pt]
		\displaystyle \frac 92 \left(x+\frac 13\right)^2 & x\in \left(-\frac 13,0\right),
		\\[10pt]
		\displaystyle 1-\frac 92 \left(x-\frac 13\right)^2 & x\in \left(0,\frac 13\right),
		\\[7pt]
		1 & x\in \left(\frac 13,+\infty\right).
	\end{cases}
\end{align}
\begin{figure}[h]
	\centering
	\includegraphics[scale=0.5]{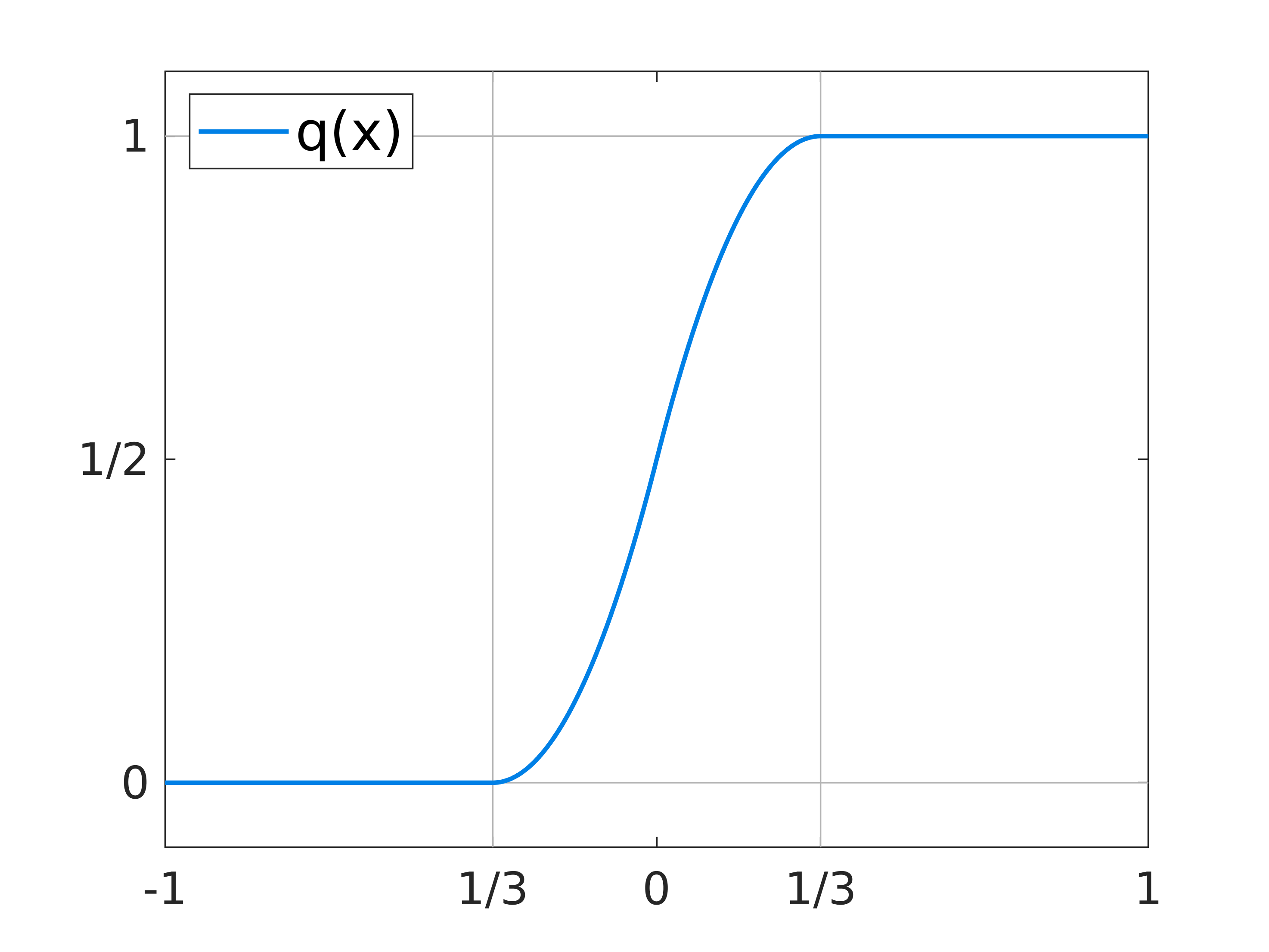}
	\caption{Graphic of the function $q(x)$}
\end{figure}

Moreover, for any $\alpha>0$, let us consider the function $F_\alpha:\mathbb R\to\mathbb R$ given by
\begin{align}\label{F_alpha}
F_\alpha(x) = F(\alpha x):= \sin\left(\alpha x+\frac{(1-s)\pi}{4}\right)-G(\alpha x),
\end{align}
where $G$ is the Laplace transform of the function
\begin{align*}
	\gamma(y):=\frac{\sqrt{4s}\sin(s\pi)y^{2s}}{2\pi(1+y^{4s}-2y^{2s}\cos(s\pi))}\exp\left(\frac 1\pi\int_0^{+\infty}\frac{1}{1+r^2}\log\left(\frac{1-r^{2s}y^{2s}}{1-r^2y^2}\right)\,dr\right).
\end{align*}
According to \cite{kwasnicki2010spectral}, $G$ is a completely monotone function satisfying 
\begin{align}\label{G_bound}
G(\xi)\leq \frac{C(1-s)}{\sqrt{s}} \xi^{-1-2s}, \;\;\textrm{ for all }\;\; \xi\in(0,+\infty).
\end{align}
Then, if we define 
\begin{align}\label{mu_k_def}
\mu_k:=\frac{k\pi}{2}-\frac{(1-s)\pi}{4},\quad k\geq 1,
\end{align}
according to \cite[Example 6.1]{kwasnicki2010spectral} we have that $F_{\mu_k}$ is the solution to the equation
\begin{align*}
\begin{cases}
\fl{s}{F_{\mu_k}(x)}=\mu_k F_{\mu_k}(x) & x> 0,
\\
F_{\mu_k}(x)=0 & x\le 0.
\end{cases}
\end{align*}

In other words, $\{F_{\mu_k}\}_{k\geq 1}$ are the eigenfunctions of the fractional Laplacian on the half-line with the zero exterior Dirichlet condition, and $\{\mu_k\}_{k\geq 1}$ are the corresponding eigenvalues.

Let us now define 
\begin{align}\label{rho_k_def}
	\varrho_k(x):=q(-x)F_{\mu_k}(1+x)+(-1)^k F_{\mu_k}(1-x),\quad k\geq 1.
\end{align}

Notice that $F_{\mu_k}(1+x)=0$ for $x\leq -1$ and $F_{\mu_k}(1-x)=0$ for $x\geq 1$. From this fact, and from the definition \eqref{q_def} of the function $q$, it immediately follows that, for all $k\geq 1$, $\varrho_k(x) = 0$ for $x\in(-1,1)^c$. In addition, it has been proved in \cite{kwasnicki2012eigenvalues} that  there is a constant $C>0$ such that
\begin{align*}
\left| \fl{s}{\varrho_k}(x)-\mu_k^{2s}\varrho_k(x)\right|\leq \frac{C(1-s)}{\sqrt{s}}\mu_k^{-1},\;\;\textrm{ for all }\;\; x\in (-1,1),\;k\geq 1.
\end{align*}

In particular, the family $\{(\varrho_k(x),\mu_k^{2s})\}_{k\geq 1}$ is at a distance $\mathcal O(1/k)$ from the spectrum $\{(\phi_k(x),\lambda_k)\}_{k\geq 1}$. Hence, instead of looking for $L^1$-estimates of the eigenfunctions $\phi_k$, we can consider the functions $\varrho_k$ defined in \eqref{rho_k_def}.

First of all, we can trivially check that 
\begin{align*}
	\varrho_k(x) = F_{\mu_k}(1+x)+q(x)f(x)-\sin\left(\mu_k(1+x)+\frac{(1-s)\pi}{4}\right)\chi_{[1,+\infty)},
\end{align*}
where we have denoted 
\begin{align*}
	f(x):=G\big(\mu_k(1+x)\big)+(-1)^kG\big(\mu_k(1-x)\big).
\end{align*}
Moreover, since the open set $\omega\subset(-1,1)$, we have that $1\pm x>0$ for all $x\in\omega$. Therefore,
\begin{align}\label{qf_est}
	\sup_{x\in\omega}|q(x)f(x)|\leq \frac{c(1-s)}{\sqrt{s}}\mu_k^{-1-2s}.
\end{align}
In addition, for $x\in\omega$ we have 
\begin{align}\label{rho_k_2}
	\varrho_k(x) & = F_{\mu_k}(1+x)+q(x)f(x) \notag 
	\\
	&= \sin\left(\mu_k(1+x)+\frac{(1-s)\pi}{4}\right)-G\big(\mu_k(1+x)\big)+q(x)f(x).
\end{align}
From here, if we denote $b_k:=\int_\omega |\varrho_k(x)|\,dx$, we can easily show that 
\begin{align*}
	B:=\inf_{k\geq 1}b_k>0.
\end{align*}
Indeed, from \eqref{G_bound}, \eqref{qf_est} and \eqref{rho_k_2} we have that
\begin{align*}
	b_k \geq \int_\omega\left| \sin\left(\mu_k(1+x)+\frac{(1-s)\pi}{4}\right)-\frac{c(1-s)}{\sqrt{s}}\mu_k^{-1-2s}\right|\,dx.
\end{align*}
Moreover, since $\lim_{k\to+\infty} \mu_k^{-1-2s} =0$, we have that there exists $k_0\in\NN$ such that
\begin{align*}
	b_k \geq \int_\omega \left|\frac 12 \sin\left(\mu_k(1+x)+\frac{(1-s)\pi}{4}\right)\right|\,dx, \;\;\textrm{ for all }\;\; k>k_0.  
\end{align*}

It follows that $B_1:=\inf_{k> k_0}b_k>0$. Hence, $B>0$ since $b_k>0$ for all $k$ (the integrand being positive except possibly for a set of zero measure in which it is zero). The proof is then concluded.
\end{proof}

\begin{remark}
{\em We mention that it has been shown in \cite[Equation (5.3)]{BFR} that the first positive eigenfunction $\phi_1$ of the fractional Laplacian  satisfies $\phi_1(x)\simeq \big(\mbox{dist}(x,(-1,1)^c)\big)^s=(1-|x|)^s$, in the sense that, there are two positive constants $0<C_1\le C_2$ such that
\begin{align}\label{pos-eig}
	C_1(1-|x|)^s\le \phi_1(x)\le C_2(1-|x|)^s,\;\;\;x\in (-1,1).
\end{align}
}
\end{remark}

\begin{proof}[\bf Proof of Theorem \ref{obs_thm}]
The proof of the estimate \eqref{obs} is based on standard spectral techniques. First of all, it is immediate to show that the solution $p$ of \eqref{frac_heat_adj} can be expressed in the basis of the eigenfunctions $\{\phi_k\}_{k\geq 1}$ of $(-d_x^2)_D^s$ as follows:
\begin{align}\label{phi_eigen}
	p(x,t) = \sum_{k\geq 1} a_k\phi_k(x)e^{-\lambda_k(T-t)},\quad a_k:=\int_{-1}^1 p_T(x)\phi_k(x)\,dx.
\end{align} 

From \eqref{phi_eigen}, by using the orthonormality of the eigenfunctions in $L^2(-1,1)$ and the change of variables $T-t\mapsto t$, it is easy to see that the observability inequality \eqref{obs} can be rewritten as
\begin{align}\label{obs_spectrum}
	\sum_{k\geq 1}|a_k|^2e^{-2\lambda_k T} \leq \C(T) \left(\int_0^T\int_\omega \bigg|\sum_{k\geq 1} a_k\phi_k(x)e^{-\lambda_k t}\bigg|\,dxdt\right)^2.
\end{align} 

The rest of the proof uses two main ingredients: the lower bound \eqref{L1_bound_eigen} for the $L^1(\omega)$-norm of the eigenfunctions $\phi_k$ and the inequality \eqref{obs2} applied to the spectrum of the fractional Laplacian. To this end, we shall first check that the eigenvalues of the fractional Laplacian satisfy the gap and summability conditions \eqref{spectral_cond1} and \eqref{spectral_cond2}. 

For the gap condition \eqref{spectral_cond1}, we can use \cite[Proposition 3]{kwasnicki2012eigenvalues} ensuring that the eigenvalues $\lambda_k$ are all simple if $s\geq 1/2$. 

Concerning instead the summability in \eqref{spectral_cond2}, according to \cite[Theorem 1]{kwasnicki2012eigenvalues} we have the following asymptotic behavior:
\begin{align}\label{eigen_asympt}
\lambda_k = \left(\frac{k\pi}{2}+\frac{(1-s)\pi}{4}\right)^{2s}+ \mathcal O\left(\frac 1k\right)\;\;\mbox{ as } k\to\infty.
\end{align}

Hence, by means of \eqref{eigen_asympt}, it is immediate to see that \eqref{spectral_cond2} holds if and only if $s> 1/2$. If $s\leq 1/2$, instead, $\sum_{k\geq 1} \lambda_k^{-1}$ behaves as the harmonic series and therefore, it is divergent. 

Summarizing, if $s> 1/2$, then both \eqref{spectral_cond1} and \eqref{spectral_cond2} hold and, therefore, the inequality \eqref{obs2} is true.

Next, let $x\in(-1,1)$ be fixed. Then, using \eqref{obs2} with $c_k:=a_k\phi_k(x)$ we have that there exists a constant $\C(T)$, depending only on $T$, such that
\begin{align}\label{Ma1}
	\sum_{k\geq 1} |a_k\phi_k(x)|e^{-\lambda_k T}\le \C(T)\int_0^T\bigg|\sum_{k\geq 1} a_k\phi_k(x)e^{-\lambda_k t}\bigg|\;dt.
\end{align}
Then, integrating \eqref{Ma1} over $\omega$ and using the estimate \eqref{L1_bound_eigen}, we get that
\begin{align*}
	\beta\sum_{k\geq 1} |a_k|e^{-\lambda_k T}\le\int_{\omega}\sum_{k\geq 1} |a_k\phi_k(x)|e^{-\lambda_k T}\;dx\le \C(T)\int_{\omega}\int_0^T\bigg|\sum_{k\geq 1} a_k\phi_k(x)e^{-\lambda_k t}\bigg|\,dtdx.
\end{align*}
It follows from the preceding estimate that
\begin{align}\label{est_prel}
	\beta^2\left(\sum_{k\geq 1} |a_k|e^{-\lambda_k T}\right)^2\le \C(T)^2\left(\int_{\omega}\int_0^T\bigg|\sum_{k\geq 1} a_k\phi_k(x)e^{-\lambda_k t}\bigg|\,dtdx\right)^2.
\end{align}
Since
\begin{align*}
	\sum_{k\geq 1} |a_k|^2e^{-2\lambda_kT} \leq  \left(\sum_{k\geq 1} |a_k|e^{-\lambda_k T}\right)^2,
\end{align*}
it follows from \eqref{est_prel} and Fubini's Theorem that
\begin{align*}
	\sum_{k\geq 1} |a_k|^2e^{-2\lambda_kT} \leq \frac{\C(T)^2}{\beta^2}\left(\int_0^T\int_{\omega}\bigg|\sum_{k\geq 1} a_k\phi_k(x)e^{-\lambda_k t}\bigg|\,dxdt\right)^2.
\end{align*}
We have shown \eqref{obs_spectrum} and the proof is finished.
\end{proof}

\begin{proof}[\bf Proof of Theorem \ref{control_thm_unconstr}]
The proof is based on a duality argument. This approach being classical in PDE control theory, for brevity we are going to present here only the principal ideas. The interested reader may found the complete details in \cite{micu2012time}.

Let us fix $T>0$. For every $p_T\in L^2(-1,1)$, let $p\in L^2((0,T);H_0^s(-1,1))\cap C([0,T];L^2(-1, 1))\cap H^1((0,T);H^{-s}(-1,1))$ be the unique weak solution of the adjoint system \eqref{frac_heat_adj}.

Then, according to \cite[Proposition 2.6]{micu2012time}, for any $z_0\in L^2(-1,1)$ the corresponding solution of \eqref{frac_heat} is null controllable at time $T$ by means of a control function $u\in L^\infty(\omega\times(0,T))$ if and only if the observability inequality \eqref{obs} holds. Moreover, following step by step the proof of that proposition it is easy to obtain the inequality \eqref{control_bound} for the $L^\infty$-norm of the control. Finally, according to \cite[Proposition 2.7]{micu2012time}, the control function $u$ is such that
\begin{align}\label{bang_bang}
	\norm{u}{L^\infty(\omega\times(0,T))} = \norm{p}{L^1(\omega\times(0,T))}.
\end{align}
The proof is then concluded.
\end{proof}

\section{Proof of the main results}\label{pr-main-rel}

In this section, we give the proof of our main results, namely, Theorem \ref{control_thm}. The proof will be divided in three parts.

\subsection{Constrained controllability of the system \eqref{frac_heat}}\label{control_thm_sec}

We present here the proof of the first part of Theorem \ref{control_thm} concerning the controllability of \eqref{frac_heat} through a non-negative control $u\in L^\infty(\omega\times(0,T))$. 

\begin{proof}[\bf Proof of Theorem \ref{control_thm}(I)]
First of all, observe that, since the equation is linear, by subtracting $\widehat{z}$, it is enough to show that there exist a time $T>0$ and a control $v\in L^\infty(\omega\times(0,T))$ fulfilling the constraint $v>-\nu$ a.e. in $\omega\times(0,T)$, such that the solution $\xi$ of the system
\begin{align}\label{frac_heat_xi}
	\begin{cases}
		\xi_t+\fl{s}{\xi}=v\chi_{\omega\times(0,T)} & \mbox{ in }\; Q,
		\\
		\xi = 0 & \mbox{ in }\; Q^c,
		\\
		\xi(\cdot,0) = z_0(\cdot)-\widehat{z}_0(\cdot) & \mbox{ in }\; (-1,1),
	\end{cases}
\end{align}
satisfies $\xi(x,T)=0$ a.e. in $(-1,1)$.

According to Theorem \ref{obs_thm}, the null-controllability of \eqref{frac_heat_xi} with $v\in L^\infty(\omega\times(0,T))$  is equivalent to the observability inequality \eqref{obs}. Actually, the controllability (and therefore, the observability of the adjoint system) being true for any time interval $(\tau,T)$, we also have
\begin{align*}
	\norm{p(\cdot,\tau)}{L^2(-1,1)}^2 \leq \C(T-\tau)\left(\int_\tau^T\int_\omega |p(x,t)|\,dxdt\right)^2. 
\end{align*} 

Using \eqref{phi_eigen}, the fact that the eigenvalues $\{\lambda_k\}_{k\ge 1}$ of the operator $(-d_x^2)_D^s$ form a non-decreasing sequence, and the dissipativity of the fractional heat semigroup ensuring exponential stability, we can readily check that
\begin{align*}
	\norm{p(\cdot,0)}{L^2(-1,1)}^2\leq e^{-2\lambda_1\tau}\norm{p(\cdot,\tau)}{L^2(-1,1)}^2
\end{align*}
for every $0<\tau<T$ and therefore, 
\begin{align*}
	\norm{p(\cdot,0)}{L^2(-1,1)}^2 \leq e^{-2\lambda_1\tau}\C(T-\tau)\left(\int_0^T\int_\omega |p(x,t)|\,dxdt\right)^2. 
\end{align*} 
By duality this means that the control $v$ can be chosen such that 
\begin{align*}
	\norm{v}{L^\infty(\omega\times(0,T))}^2\leq e^{-2\lambda_1\tau}\C(T-\tau)\norm{z_0-\widehat{z}_0}{L^2(-1,1)}^2.
\end{align*}
Taking $\tau = T/2$ we obtain that
\begin{align*}
	\norm{v}{L^\infty(\omega\times(0,T))}^2\leq e^{-\lambda_1T}\C(T)\norm{z_0-\widehat{z}_0}{L^2(-1,1)}^2.
\end{align*}

Furthermore, we recall that the observability constant $\mathcal C(T)$ is uniformly bounded for any $T>0$. Hence, for $T$ large enough we have
\begin{align*}
	\norm{v}{L^\infty(\omega\times(0,T))}^2< \nu.
\end{align*}

This immediately implies that $v>-\nu$. Therefore, we have shown the existence of a control $v>-\nu$ steering the solution of \eqref{frac_heat_xi} from $z_0-\widehat{z}_0$ to zero in time $T>0$, provided that $T$ is large enough. Consequently, the solution $z$ of \eqref{frac_heat} is controllable to the trajectory $\widehat{z}$ in time $T$. 

Finally, if $z_0\geq 0$, thanks to Lemma \ref{lem} we also have $z(x,t)\geq 0$ for almost every $(x,t)\in (-1,1)\times (0,T)$. The proof is finished.
\end{proof}

\subsection{Positivity of the minimal time for constrained controllability}\label{pos_time_thm_sec}

This section is devoted to the proof of the second part of Theorem \ref{control_thm} that shows that the minimal time $T_{\rm min}$ needed for controlling the system \eqref{frac_heat} with a non-negative control $u\in L^\infty(\omega\times (0,T))$ is necessarily strictly positive.

\begin{proof}[\bf Proof of Theorem \ref{control_thm}(II)]
Let us start by writing the	solution of \eqref{frac_heat} in the basis of the eigenfunctions $\{\phi_k\}_{k\geq 1}$, that is, 
\begin{align}\label{sol_z_spect}
	z(x,t) = \sum_{k\geq 1} z_k(t)\phi_k(x),
\end{align}
with 
\begin{align}\label{z_k}
	z_k(t):=\int_{-1}^1 z(x,t)\phi_k(x)\,dx.
\end{align}

Derivating \eqref{z_k} and using \eqref{frac_heat} (or multiplying \eqref{frac_heat} by $\phi_k$ and integrating over $(-1,1)$), we can readily check that the coefficients $z_k(t)$ satisfy the following first order ODE:
\begin{align*}
	\begin{cases}
		z_k'(t) = - \lambda_kz_k(t) + u_k(t), & t\in (0,T)
		\\[10pt]
		\displaystyle z_k(0) = \int_{-1}^1 z_0(x)\phi_k(x)\,dx=:z_k^0,
	\end{cases}
\end{align*}
where we have denoted
\begin{align*}
	u_k(t):=\int_\omega u(x,t)\phi_k(x)\,dx.
\end{align*}
Hence, employing the variation of constants formula we easily get that
\begin{align}\label{z_k_expl}
	z_k(t) = z_k^0e^{-\lambda_k t}+\int_0^te^{-\lambda_k(t-\tau)}u_k(\tau)\,d\tau.
\end{align}
On the other hand, since $z(x,T)=\widehat{z}(x,T)$ a.e. in $(-1,1)$, we have that
\begin{align*}
	z_k(T) = \int_{-1}^1 \widehat{z}(x,T)\phi_k(x)\,dx =: \zeta_k,
\end{align*}
and from \eqref{z_k_expl} we immediately obtain that
\begin{align}\label{z_k_expl2}
	\zeta_k - z_k^0e^{-\lambda_k T} = \int_0^Te^{-\lambda_k(T-\tau)}u_k(\tau)\,d\tau.
\end{align}

Let us denote $u_k^+(t):= \max(u_k(t),0)$ and $u_k^-(t):= \max(-u_k(t),0)$ the positive and negative part of $u_k(t)$, respectively, so that $u_k(t) = u_k^+(t)-u_k^-(t)$. Notice that $u_k^+(t), u_k^-(t)\geq 0$. Moreover, for every $0\leq \tau\leq T$, we have 
\begin{align*}
	e^{-\lambda_kT} \leq e^{-\lambda_k(T-\tau)} \leq 1.
\end{align*}
Therefore, 
\begin{align}\label{uk_plus_inequality}
	e^{-\lambda_kT} \int_0^T u_k^+(\tau)\,d\tau \leq \int_0^Te^{-\lambda_k(T-\tau)}u_k^+(\tau)\,d\tau \leq \int_0^Tu_k^+(\tau)\,d\tau, 
\end{align}
and
\begin{align}\label{uk_minus_inequality}
e^{-\lambda_kT} \int_0^T u_k^-(\tau)\,d\tau \leq \int_0^Te^{-\lambda_k(T-\tau)}u_k^-(\tau)\,d\tau \leq \int_0^Tu_k^-(\tau)\,d\tau.
\end{align}
Moreover, by using \eqref{z_k_expl2}, we have
\begin{align}\label{uk_plus_identity}
	\zeta_k - z_k^0e^{-\lambda_k T} + \int_0^Te^{-\lambda_k(T-\tau)}u_k^-(\tau)\,d\tau = \int_0^Te^{-\lambda_k(T-\tau)}u_k^+(\tau)\,d\tau,
\end{align}
and
\begin{align}\label{uk_minus_identity}
	\zeta_k - z_k^0e^{-\lambda_k T} - \int_0^Te^{-\lambda_k(T-\tau)}u_k^+(\tau)\,d\tau = -\int_0^Te^{-\lambda_k(T-\tau)}u_k^-(\tau)\,d\tau.
\end{align}
From \eqref{uk_plus_inequality} and \eqref{uk_plus_identity}, we can obtain that
\begin{align}\label{uk_plus_bounds}
	\zeta_k - z_k^0e^{-\lambda_k T} &+ \int_0^Te^{-\lambda_k(T-\tau)}u_k^-(\tau)\,d\tau \notag
	\\
	&\leq\int_0^T u_k^+(\tau)\,d\tau \leq\zeta_ke^{\lambda_k T} - z_k^0 + \int_0^Te^{\lambda_k\tau}u_k^-(\tau)\,d\tau.
\end{align}
On the other hand, \eqref{uk_minus_inequality} and \eqref{uk_minus_identity} yield
\begin{align}\label{uk_minus_bounds}
	\zeta_ke^{\lambda_k T} &- z_k^0 - \int_0^Te^{\lambda_k\tau}u_k^+(\tau)\,d\tau \notag
	\\
	&\leq\int_0^T (-u_k^-(\tau))\,d\tau \leq \zeta_k -z_k^0e^{-\lambda_k T} - \int_0^Te^{-\lambda_k(T-\tau)}u_k^+(\tau)\,d\tau.
\end{align}

Assume by contradiction that, for every $T>0$, there exists a non-negative control function $u^T$ steering $z_0$ to $\widehat{z}(\cdot,T)$ in time $T$, and that $\widehat{z}(\cdot,T)\neq z_0$ (otherwise, the trajectory $z\equiv z_0=\widehat{z}(\cdot,T)$ trivially solves the problem). Then, applying \eqref{uk_plus_bounds} to $(u_k^T)^+$ and taking the limit as $T\to 0^+$ we obtain that 
\begin{align}\label{uk_plus_limit}
	\lim_{T\to 0^+} \int_0^T (u_k^T)^+(\tau)\,d\tau = \zeta_k - z_k^0. 
\end{align}

In the same way, applying \eqref{uk_minus_bounds} to $(u_k^T)^-$ and taking the limit as $T\to 0^+$ we obtain that 
\begin{align}\label{uk_minus_limit}
	\lim_{T\to 0^+} \int_0^T -(u_k^T)^-(\tau)\,d\tau = \zeta_k - z_k^0. 
\end{align}
Then, \eqref{uk_plus_limit} and \eqref{uk_minus_limit} yield
\begin{align*}
	\lim_{T\to 0^+} \int_0^T \Big((u_k^T)^+(\tau)-(u_k^T)^-(\tau)\Big)\,d\tau = \lim_{T\to 0^+} \int_0^T u_k^T(\tau)\,d\tau = 2\zeta_k - 2z_k^0=:\gamma. 
\end{align*}
Since $z_0\in L^2(-1,1)$, we clearly have 
\begin{align*}
	\sum_{k\geq 1}|z_k^0|^2 = \sum_{k\geq 1}\left(\zeta_k^2 - \gamma \zeta_k + \frac{\gamma^2}{4}\right) <+\infty,
\end{align*}
which implies that 
\begin{align}\label{gam=0}
	\lim_{k\to+\infty}\left(\zeta_k^2 - \gamma \zeta_k + \frac{\gamma^2}{4}\right)=0.
\end{align}

Moreover, since $\{\phi_k\}_{k\geq 1}$ is an orthonormal complete system in $L^2(-1,1)$, it follows that $\phi_k\rightharpoonup 0$ (weak convergence) in $L^2(-1,1)$ as $k\to+\infty$. This implies that
\begin{align*}
	\lim_{k\to+\infty}(\widehat{z}(\cdot,T),\phi_k)_{L^2(-1,1)}=\lim_{k\to+\infty}\int_{-1}^1\widehat{z}(x,T)\phi_k(x)\;dx=\lim_{k\to+\infty}\zeta_k=0.
\end{align*}
This identity and \eqref{gam=0} yield $\gamma=0$. Thus, we immediately have that
\begin{align*}
	0 = 2z_k^0-2\zeta_k = 2\int_{-1}^1 (z_0(x)-\widehat{z}(x,T))\phi_k(x)\,dx, \;\;\textrm{ for all }\;\; k\geq 1.
\end{align*}

This is possible if and only if $z_0(x)=\widehat{z}(x,T)$ a.e. in $(-1,1)$, which contradicts our previous assumption. The proof is then concluded.
\end{proof}

\subsection{Constrained controllability in minimal time with measure controls}\label{measure_control_thm_sec}

In this section we give the proof of the third part of Theorem \ref{control_thm} which ensures that constrained controllability of the system \eqref{frac_heat} holds in the minimal time $T_{\rm min}$ with controls in the (Banach) space of the Radon measures $\M$ endowed with the norm
\begin{align*}
	\|\mu\|_{\M}:=\sup\bigg\{&\int_{\omega\times (0,T_{\rm min})}\varphi(x,t)\;d\mu(x,t): 
	\\
	&\varphi\in C(\overline{\omega}\times [0,T_{\rm min}],\RR)\mbox{ and }\; \max_{\overline{\omega}\times [0,T_{\rm min}]}|\varphi|=1\bigg\}.
\end{align*}

We recall that solutions of \eqref{frac_heat} with controls in $\M$ are defined by transposition (see \cite{lions1968problemes}). 

%

\begin{definition}
Given $z_0\in L^2(-1,1)$, $T>0$, and $u\in\mathcal M(\omega\times(0,T))$, we shall say that a function $z\in L^1(Q)$ is a solution of \eqref{frac_heat} defined by transposition if it satisfies the identity
\begin{align}\label{U}
	\int_{\omega\times (0,T)}p(x,t)\;du(x,t)= \langle z(\cdot,T),p_T\rangle - \int_{-1}^1 z_0(x)p(x,0)\,dx,
\end{align}
where, for every $p_T\in L^\infty(-1,1)$, the function $p\in L^\infty(Q)$ is the unique solution of the adjoint system
\begin{align}\label{frac_heat_adj_transp}
	\begin{cases}
		-p_t+\fl{s}{p} = 0 & \mbox{ in }\;  Q,
		\\
		p = 0 & \mbox{ in }\; Q^c,
		\\
		p(\cdot,T) = p_T & \mbox{ in }\; (-1,1).	
	\end{cases}
\end{align}
\end{definition}

The existence of a unique transposition solution $z\in L^1(Q)$ of \eqref{frac_heat} is a consequence of the maximum principle for parabolic equations together with duality and approximation arguments. These arguments being classical, we omit here the technical details. 

\begin{proof}[\bf Proof of Theorem  \ref{control_thm}(III)]
Let us now prove the existence of a measure-valued non-negative control $u\in\M$ realizing the controllability of the system \eqref{frac_heat} exactly in time $T_{\rm min}$. Let us denote 	
\begin{align*}
	T_k:=T_{\rm min}+\frac 1k,\;\,k\geq 1.
\end{align*}

In view of the definition \eqref{t_min} of the minimal control time, there exists a sequence of non-negative controls $\{u^{T_k}\}_{k\geq 1}\subset L^\infty(\omega\times(0,T_k))$ such that the corresponding solutions $\{z^k\}_{k\ge 1}$ of \eqref{frac_heat} with $z^k(x,0) = z_0(x)$ a.e. in $(-1,1)$ satisfy $z^k(x,T_k) = \widehat{z}(x,T_k)$ a.e. in $(-1,1)$ and for every $k\ge 1$. We extend these controls by $\widehat{u}$ on $(T_k,T_{\rm min}+ 1)$ to get a new sequence (that we shall denote again by $u^{T_k}$) in $L^\infty(\omega\times(0,T_{\rm min}+1))$. 

We want to prove that this sequence is bounded in $L^1(\omega\times(0,T_{\rm min}+1))$. To this end, let us assume that the initial datum $p_T$ in \eqref{frac_heat_adj_transp} is positive which, thanks to Lemma \ref{lem}, implies that the corresponding solution $p$ satisfies $p(x,t)\geq\theta>0$ for all $(x,t)\in (-1,1)\times (0,T_{\rm min}+1)$ and for some positive constant $\theta$. Then, \eqref{U} and the positivity of $u^{T_k}$ yield
\begin{align*}
	\theta\norm{u^{T_k}}{L^1(\omega\times(0,T_{\rm min}+1))} &= \theta \int_0^{T_{\rm min}+1}\int_{\omega} u^{T_k}(x,t)\,dxdt 
	\\
	&\leq \int_0^{T_{\rm min}+1}\int_{-1}^1 p(x,t)u^{T_k}(x,t)\,dxdt 
	\\
	&= \langle z(\cdot,T),p_T\rangle - \int_{-1}^1 z_0(x)p(x,0)\,dx \leq M,
\end{align*}
where the last inequality is due to the continuous dependence of the solutions  on the initial data of the direct and adjoint problems.

We have shown that the sequence $\{u^{T_k}\}_{k\geq 1}$ is bounded in $L^1(\omega\times (0,T_{\rm min}+1))$, hence, it is bounded in the space $\mathcal M(\omega\times(0,T_{\rm min}+1))$. Thus, extracting a sub-sequence if necessary, we have that
\begin{align*}
	u^{T_k}  \overset{\ast}{\rightharpoonup} \widetilde{u}  \quad\textrm{ weakly-}\!\ast \textrm{ in } \mathcal M(\omega\times(0,T_{\rm min}+1)) \;\mbox{ as } k\to+\infty.
\end{align*}
Clearly, the limit control $\widetilde u$  satisfies the non-negativity constraint.

Now, for any $k$ large enough and $T_{\rm min}< T_0<T_{\rm min}+1$, by \eqref{U} and the definition of the control $u^{T_k}$ we have
\begin{align}\label{wl}
	\int_{\omega\times (0,T_0)}p(x,t)\;du^{T_k}(x,t)= \langle \widehat{z}(\cdot,T_0),p_T\rangle - \int_{-1}^1 z_0(x)p(x,0)\,dx.
\end{align}

Letting $p_T$ be the first non-negative eigenfunction $\phi_1$ of $(-d_x^2)_D^s$ (see \eqref{pos-eig}), or generally any non-negative function in the domain $D((-d_x^2)_D^s)$ of the operator $(-d_x^2)_D^s$, we get that the solution $p$ of the system \eqref{frac_heat_adj_transp} belongs to $C([0,T]; D((-d_x^2)_D^s))\hookrightarrow C([0,T]\times[-1,1])$.
Therefore, by definition of the $\rm weak^*$ limit, letting $k\to +\infty$ in \eqref{wl} we obtain that
\begin{align*}
	\int_{\omega\times (0,T_0)}p(x,t)\;d\,\widetilde u(x,t)= \langle \widehat{z}(\cdot,T_0),p_T\rangle - \int_{-1}^1 z_0(x)p(x,0)\,dx,
\end{align*}
which in turn implies that $z(x,T_0)=\widehat z(x,T_0)$ a.e. in $(-1,1)$. Finally, taking the limit as $T_0\to T_{\rm min}$ and using the fact that
\begin{align*}
	 |\widetilde u|(\omega\times(T_{\rm min},T_0)) = |\hat u|(\omega\times(T_{\rm min},T_0))=0,\;\;\mbox{ as }\; T_0\to T_{\rm min},
\end{align*}
we can deduce that $z(x,T_{\rm min})=\widehat z(x,T_{\rm min})$ a.e. in $(-1,1)$. This concludes the proof.
\end{proof}

\subsection{Lower bounds for the minimal controllability time}\label{sec44}

Theorem \ref{control_thm} shows that the fractional heat equation \eqref{frac_heat} is controllable to positive trajectories by means of a non-negative control $u$, provided that the controllability time is large enough. Moreover, in the minimal controllability time $T_{\rm min}$ defined by \eqref{t_min}, we proved that the non-negative controls are in the space $\M$ of Radon measures. Notwithstanding, our result does not provide a precise estimate for $T_{\rm min}$.

This is indeed a delicate issue. In \cite{pighin2018controllability}, it has been addressed for the case of the classical linear and semi-linear heat equations by means of a quite general approach. Nevertheless, this methodology does not immediately apply to the fractional heat equation \eqref{frac_heat}. 

In order to clarify this point, in what follows we present an abridged description of how the techniques developed in \cite{pighin2018controllability} should be applied in our case and we highlight the difficulties we encounter.

The starting point is to notice that, by a simple translation argument, we can reduce ourselves to consider the case in which the fractional heat equation \eqref{frac_heat} has zero initial datum. Then, from the definition of transposition solutions of \eqref{U} we have
\begin{align*}
	\langle z(\cdot,T),p_T\rangle-\int_{\omega\times (0,T)}p(x,t)\;du(x,t) = 0.
\end{align*}

Following the procedure of \cite{pighin2018controllability}, the idea is now to find $T_0>0$ and $p_T\in L^2(-1,1)$ such that the corresponding solution of the adjoint system \eqref{frac_heat_adj} satisfies 
\begin{align}\label{p_prop}
	\begin{cases}
		p \geq 0,  & \textrm{ in }\omega\times(0,T_0),
		\\
		\langle\,\widehat{z}(\cdot,T),p_T\rangle<0,  &\textrm{ for all }T\in [0,T_0).
	\end{cases}
\end{align}

Then, an explicit lower bound of the controllability minimal time is obtained by analyzing sharply the conditions required for \eqref{p_prop} to hold. See \cite[Sections 5.1 and 6.1]{pighin2018controllability} for more details.

The choice of a suitable initial datum for the adjoint equation \eqref{frac_heat_adj} is not at all obvious. In \cite{pighin2018controllability}, for the case of the linear and semi-linear heat equations with a boundary control, the authors have proposed to consider $p_T$ in the form
\begin{align}\label{pT1}
	p_T = -\phi_1+2(1-\zeta)\phi_1 
\end{align}
or 
\begin{align}\label{pT2}
	p_T = -\alpha\phi_1+\beta\phi_3,
\end{align}
where $\phi_1$ and $\phi_3$ are respectively the first and third eigenfunction of the Dirichlet Laplacian, $\alpha$ and $\beta$ are suitable positive constants, and $\zeta$ is a cut-off function supported outside the control region.

With these choices, a lower estimate for $T_{\rm min}$ is obtained by employing the positivity of $\phi_1$ and the explicit knowledge of the eigenfunctions $\phi_1$ and $\phi_3$.

Nevertheless, the above proposals for $p_T$ do not seem to be appropriate for our fractional heat equation \eqref{frac_heat}. This, for at least two main reasons. On the one hand, we cannot ensure that with $p_T$ in the form \eqref{pT1} or \eqref{pT2} the corresponding solution of the adjoint equation \eqref{frac_heat_adj} remains positive in $\omega$. On the other hand, even if we were able to overcome this first difficulty, for the eigenfunctions of the Dirichlet fractional Laplacian we do not have a nice expression in terms of sinusoidal functions, as in the case of the classical local operator. Therefore, to perform explicit estimates is a much more difficult issue.

In view of this discussion, we can conclude that the methodology just presented is not immediately applicable to the context of the present paper. For this reason, we are not able to provide explicit analytic lower estimates for the minimal time. This will be done, instead, in Section \ref{numerical_sec} through a numerical approach.

\section{Numerical simulations}\label{numerical_sec}

Our main result, Theorem \ref{control_thm}, states that the fractional heat equation \eqref{frac_heat} is controllable from any initial datum $z_0\in L^2(-1,1)$ to any positive trajectory $\widehat{z}$ by means of the action of a non-negative control $u\in L^\infty(\omega\times(0,T))$, provided that $s>1/2$ and the controllability time $T$ is large enough.

In this section, we present some numerical simulations that confirm these theoretical results. In particular, we will focus on two specific situations:
\begin{itemize}
	\item \textbf{Case 1}: we choose as initial datum 
	\begin{align*}
		z_0(x) := 2\cos\left(\frac \pi2 x\right),
	\end{align*}
	and we set as a target $\widehat{z}(\cdot,T)$ the solution at time $T$ of \eqref{frac_heat} with initial datum 
	\begin{align*}
		\widehat{z}_0(x) := \frac {1}{20}\cos\left(\frac \pi2 x\right)
	\end{align*}
	and right-hand side $\widehat{u}\equiv 1/5$. In this case, as it can be observed in figure \ref{constrained_down_tmin_fig}, we have $z_0>\widehat{z}(\cdot,T)$ a.e. in $(-1,1)$.
	
	\item \textbf{Case 2}: we choose as initial datum 
	\begin{align*}
		z_0(x) := \frac 12\cos\left(\frac \pi2 x\right),
	\end{align*}
	and we set as a target $\widehat{z}(\cdot,T)$ the solution at time $T$ of \eqref{frac_heat} with initial datum 
	\begin{align*}
		\widehat{z}_0(x) := 6\cos\left(\frac \pi2 x\right)
	\end{align*}
	and right-hand side $\widehat{u}\equiv 1$. In this case, as it can be observed in figure \ref{constrained_up_tmin_fig}, we have $z_0<\widehat{z}(\cdot,T)$ a.e. in $(-1,1)$.
\end{itemize}


In both cases, we first estimate numerically $T_{\rm min}$ by formulating the minimal-time control problem as an optimization one. In a second moment, we will show that for $T\geq T_{\rm min}$ the fractional heat equation \eqref{frac_heat} is controllable from $z_0\in L^2(-1,1)$ to the given trajectories $\widehat{z}(\cdot,T)$ (see above) by means of a non-negative control $u$, while for $T< T_{\rm min}$ this controllability result is not achieved.

To simplify the presentation, we always choose the interval $\omega=(-0.3,0.8)\subset (-1,1)$ as the control region. Moreover, we focus on the case $s>1/2$, in which we know that \eqref{frac_heat} is controllable. We recall that, if $s\leq 1/2$, it has been shown in \cite{biccari2017controllability}, both on a theoretical and numerical level, that the fractional heat equation \eqref{frac_heat} is not controllable, not even in the absence of constraints.

\subsection{Case 1: $z_0>\widehat{z}(\cdot,T)$}

Let us start by analyzing the case of an initial datum $z_0$ above the final target $\widehat z(\cdot,T)$. As we have mentioned above, we first estimate the minimal controllability time $T_{\rm min}$ through a suitable optimization problem and we address the numerical constrained controllability of \eqref{frac_heat} in a time horizon $T\geq T_{\rm min}$. In a second moment, we will consider the case $T< T_{\rm min}$.

\subsubsection{Numerical approximation of the minimal controllability time}

To obtain an approximation of the minimal controllability time $T_{\rm min}$, we solve the following constrained optimization problem:
\begin{align}\label{Topt}
	\textrm{minimize}\;T
\end{align}
subject to the constraints
\begin{align}\label{Topt_constr}
	& T>0, \notag
	\\
	&z_t+\fl{s}{z}=u\chi_\omega, & &\textrm{a.e. in } (-1,1)\times(0,T), \notag
	\\
	&z(x,0)=z_0\geq 0, & &\textrm{a.e. in } (-1,1),
	\\
	&z(x,t)\geq 0, & &\textrm{a.e. in } (-1,1)\times(0,T), \notag
	\\
	&u(x,t)\geq 0, & &\textrm{a.e. in } \omega\times(0,T). \notag
\end{align}

To solve this problem numerically, we employ the expert interior-point optimization routine \texttt{IpOpt} (see \cite{wachter2006implementation}) combined with automatic differentiation and the modeling language \texttt{AMPL} (\cite{fourer1990modeling}).

To perform our simulations, we apply a FE method for the space discretization of the fractional Laplacian on a uniform space-grid $x_i = -1 + \frac{2i}{N_x}$, $i = 1,\ldots,N_x$, with $N_x=20$ (see \cite{biccari2017controllability}). Moreover, we use an explicit Euler scheme for the time integration on the time-grid $t_j = \frac{Tj}{N_t}$,  $j = 0,\ldots,N_t$, with $N_t$ satisfying the Courant-Friedrich-Lewy condition. In particular, we will choose here $N_t=300$.

The minimal time that we obtain from our simulations is $T_{\rm min}\simeq 0.8285$ and we can see in figure \ref{constrained_down_tmin_fig} that, in this time horizon, we are able to steer the initial datum $z_0$ to the desired target by maintaining the positivity of the solution.

\begin{figure}[h]
	\centering
	\includegraphics[scale=0.45]{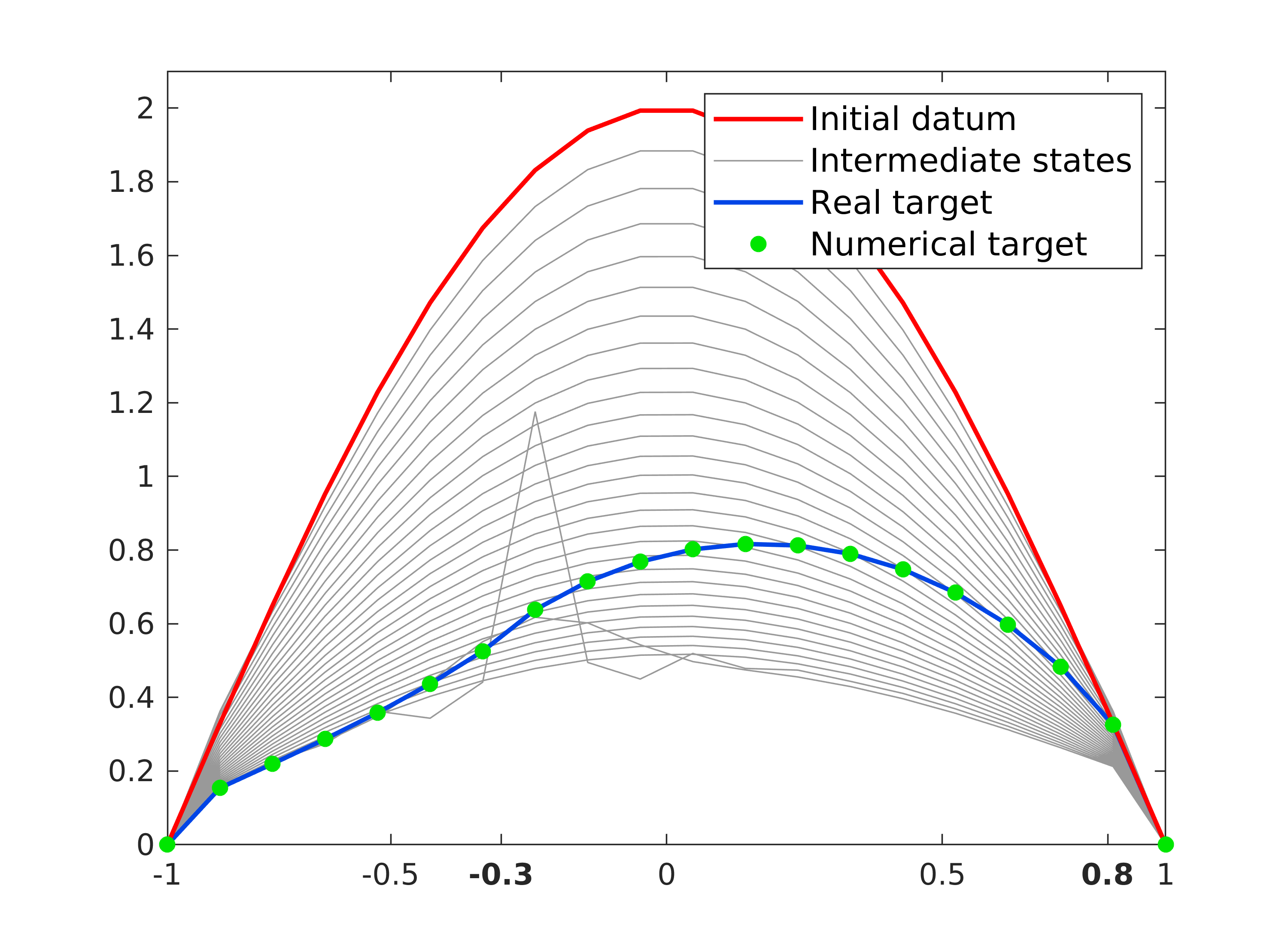}
	\caption{Evolution in the time interval $(0,T_{\rm min})$ of the solution of \eqref{frac_heat} with $s=0.8$. The blue curve is the target we want to reach while the green bullets indicate the target we computed numerically.}\label{constrained_down_tmin_fig}
\end{figure}

We have to stress here that the minimal time $T_{min}$ we have obtained is just an approximation computed by solving numerically the optimization problem \eqref{Topt}-\eqref{Topt_constr}. The validity of this approximation shall be confirmed by a convergence result as the mesh-sizes tend to zero. We will present more details on this issue in Section \ref{sec-con-rem}.

In figures \ref{contol_down_tmin_disposition_fig} and \ref{contol_down_tmin_fig}, we show the behavior of the minimal-time control corresponding to the dynamics of figure \ref{constrained_down_tmin_fig}. As we can see, the control is initially inactive and leaves the equation evolving under the dissipative effect of the heat semigroup. When the state finally approaches the target, the control prevents it to pass below by means of an impulsional action localized in certain specific points of the control region. Moreover, we have to mention that, since the range of amplitudes of the impulses of our minimal-time control is quite large, the plot in figure \ref{contol_down_tmin_fig} (and in figure \ref{contol_up_tmin_fig} below) is in logarithmic scale. In this way,  also the impulses with smaller amplitude can be appreciated. 

\begin{figure}[h]
	\centering
	\includegraphics[scale=0.2]{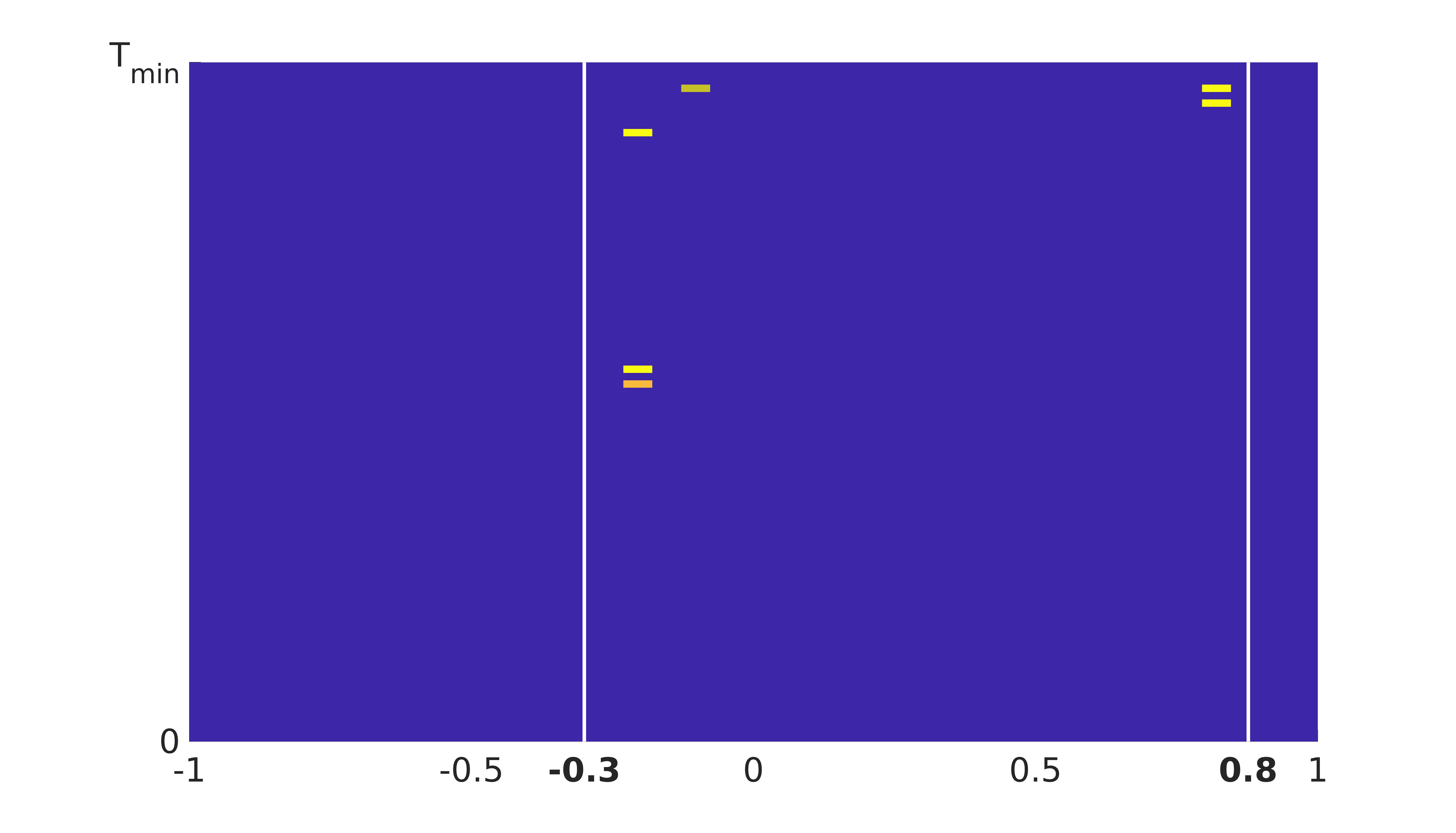}
	\caption{Minimal-time control: space-time distribution of the impulses. The white lines delimit the control region $\omega=(-0.3,0.8)$. The regions in which the control is active are marked in yellow.}\label{contol_down_tmin_disposition_fig}
\end{figure}

\begin{figure}[h]
	\centering
	\includegraphics[scale=0.25]{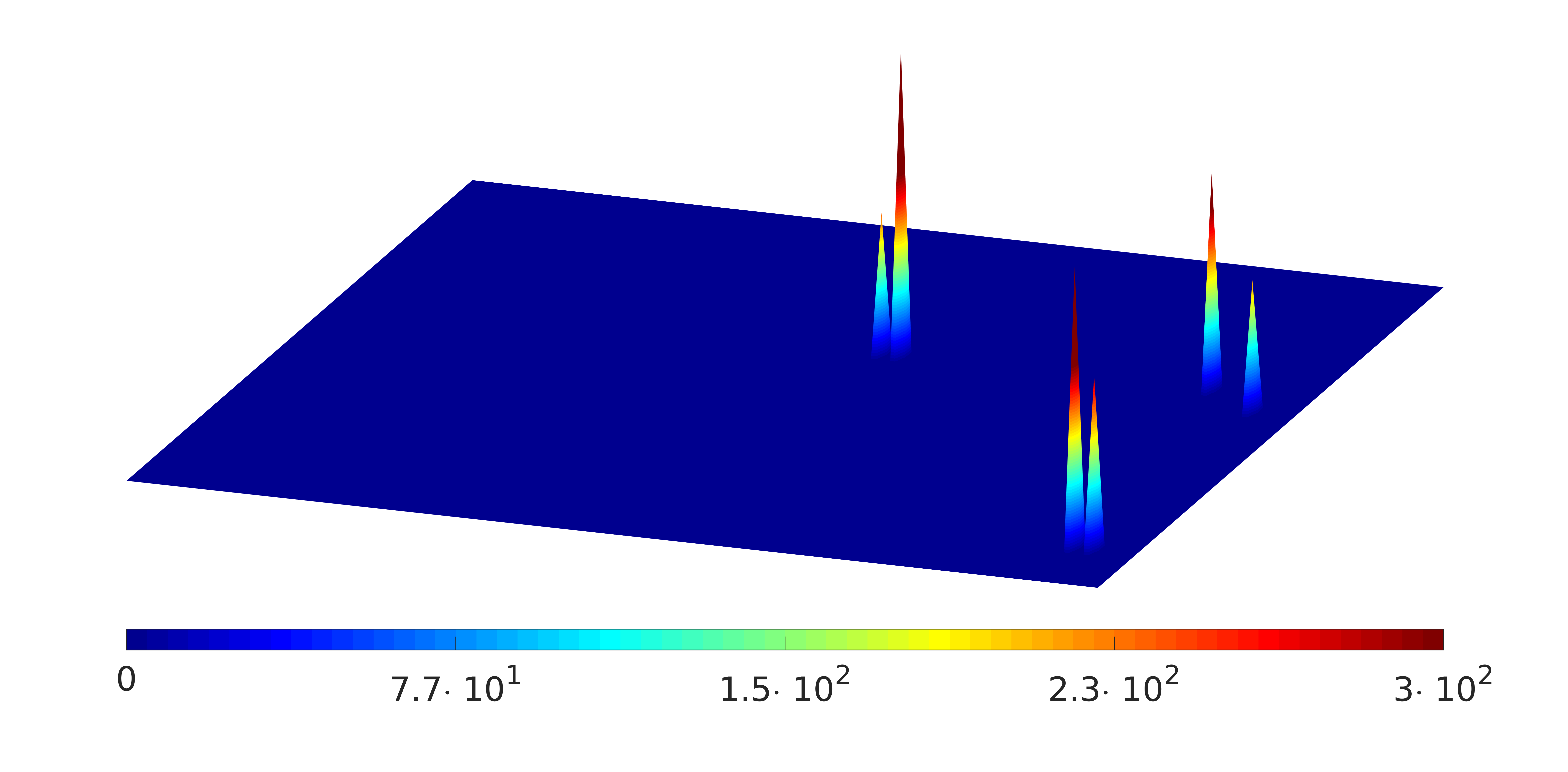}
	\caption{Minimal-time control: intensity of the impulses in logarithmic scale. In the $(t,x)$ plane in blue the time $t$ varies from $t=0$ (left) to $t=T_{\rm min}$ (right).}\label{contol_down_tmin_fig}
\end{figure}

This behavior of the control is not surprising. Indeed, as it was already observed in \cite{loheac2017minimal,pighin2018controllability}, the minimal-time controls are expected to be \textit{atomic} measures, in particular linear combinations of Dirac deltas. Our simulations are thus consistent with the aforementioned papers. Nevertheless, a more complete analysis of the positions and amplitudes of these impulses shall be addressed. This will be the subject of a future work.

The impulsional behavior of the control is then lost when extending the time horizon beyond $T_{\rm min}$. In figure \ref{constrained_down_tlarge_fig}, we show the evolution of the solution of the fractional heat equation \eqref{frac_heat} from the initial datum $z_0$ to the target $\widehat{z}(\cdot,T)$ in the time horizon $T=0.9$. As we can observe, in accordance with our theoretical results, the equation is still controllable in time $T$. Nevertheless, the action of the control is now more distributed in $\omega$ (see figure \ref{control_down_tlarge_fig}). 

\begin{figure}[h]
	\includegraphics[scale=0.45]{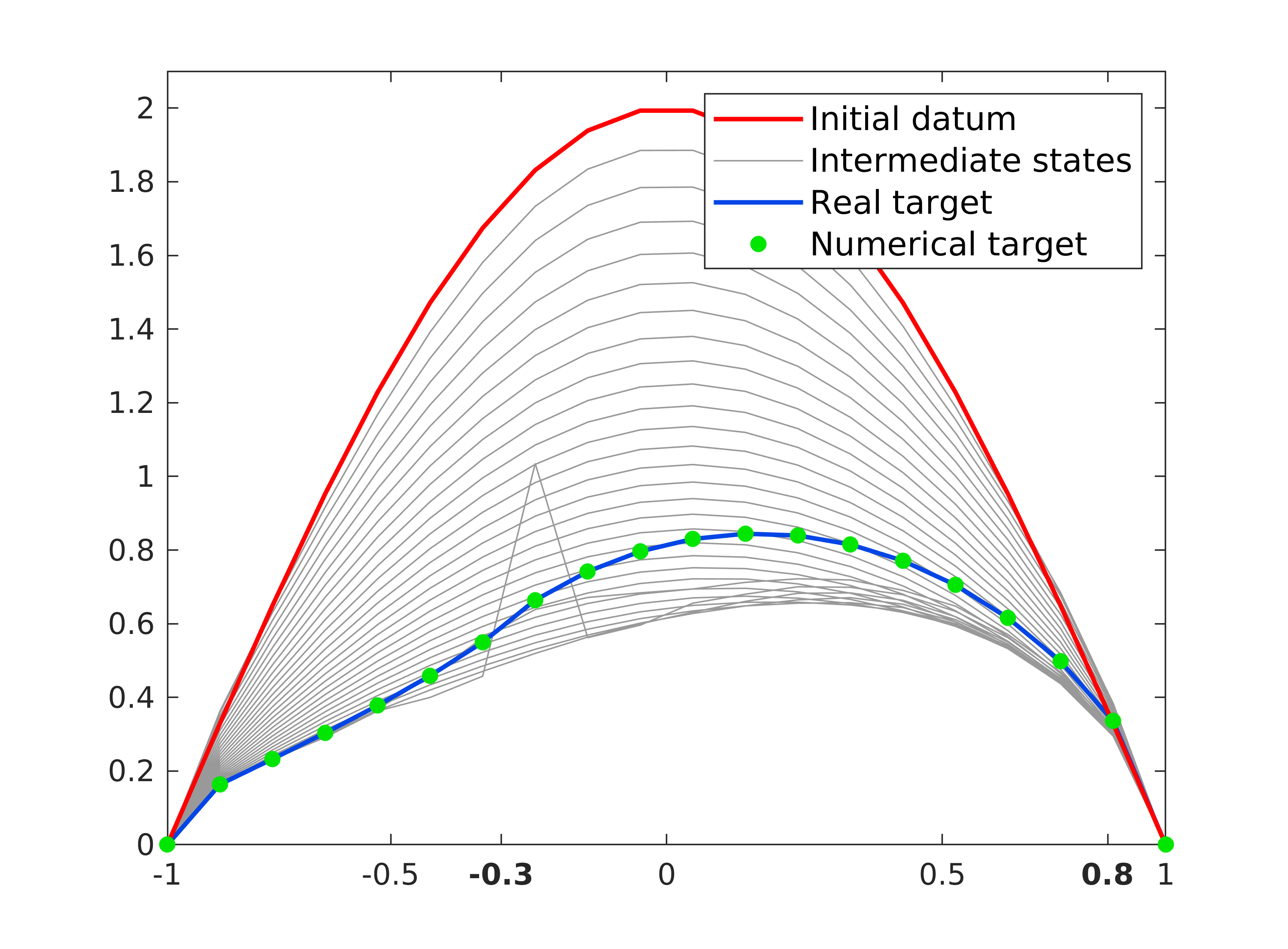}
	\caption{Evolution in the time interval $(0,0.9)$ of the solution of \eqref{frac_heat} with $s=0.8$. The blue curve is the target we want to reach while the green bullets indicate the target we computed numerically.}\label{constrained_down_tlarge_fig}
\end{figure}

\begin{figure}[h]
	\centering
	\includegraphics[scale=0.2]{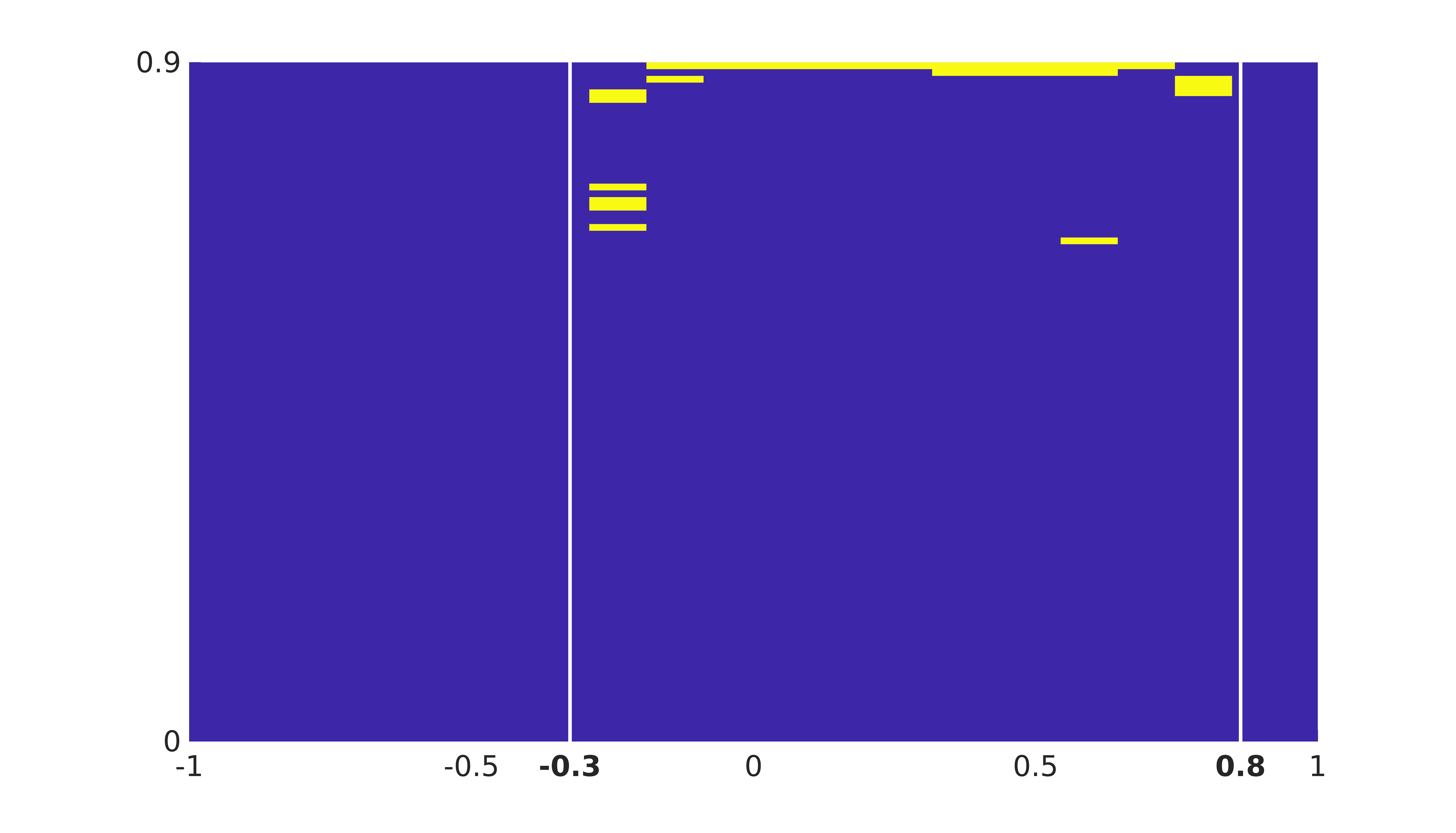}
	\caption{Behavior of the control in time $T=0.9$. The white lines delimit the control region $\omega=(-0.3,0.8)$. The regions in which the control is active are marked in yellow. The atomic nature is lost.}\label{control_down_tlarge_fig}
\end{figure}

\subsubsection{Lack of controllability in short time}

In this section, we conclude our discussion on Case 1 by showing the lack of controllability of \eqref{frac_heat} when the 
time horizon is too short. 

To this end, we employ a classical conjugate gradient method implemented in the DyCon Computational Toolbox (\cite{dycontoolbox}) for solving the following optimization problem:

\begin{align}\label{opt_dycon}
	\min\;\norm{z(\cdot,T)-\widehat{z}(\cdot,T)}{L^2(-1,1)}
\end{align}
subject to the constraints
\begin{align}\label{opt_dycon_constr}
	&z_t+\fl{s}{z}=u\chi_\omega, & & \textrm{a.e. in } (-1,1)\times(0,T), \notag
	\\
	&z(x,0)=z_0\geq 0, & &\textrm{a.e. in } (-1,1),
	\\
	&z(x,t)\geq 0, & & \textrm{a.e. in } (-1,1)\times(0,T), \notag
	\\
	&u(x,t)\geq 0, & &\textrm{a.e. in } \omega\times(0,T). \notag
\end{align}

As before, we apply a FE method for the space discretization of the fractional Laplacian on a uniform space-grid with $N_x=20$ points and we use an explicit Euler scheme for the time integration on a time-grid with $N_t=300$ points. Furthermore, we choose a time horizon $T=0.7$, which is below the minimal controllability time $T_{\rm min}$. 

Our simulation then show that the solution of \eqref{frac_heat} fails to be controlled (see figure \ref{constrained_down_tsmall_fig}). In fact, since we want to reach a final target which is below the initial datum $z_0$, the natural approach would be to push down the state with a ``negative'' action. Since, however, the control is not allowed to do this because of the non-negativity constraint, its best option is to remain inactive for the entire time interval and to let the solution diffuses under the action of the fractional heat semigroup. Nevertheless, this is not enough to reach the target in the time horizon provided.

\begin{figure}[h]
	\begin{minipage}{0.47\textwidth}
		\centering
		\includegraphics[scale=0.43]{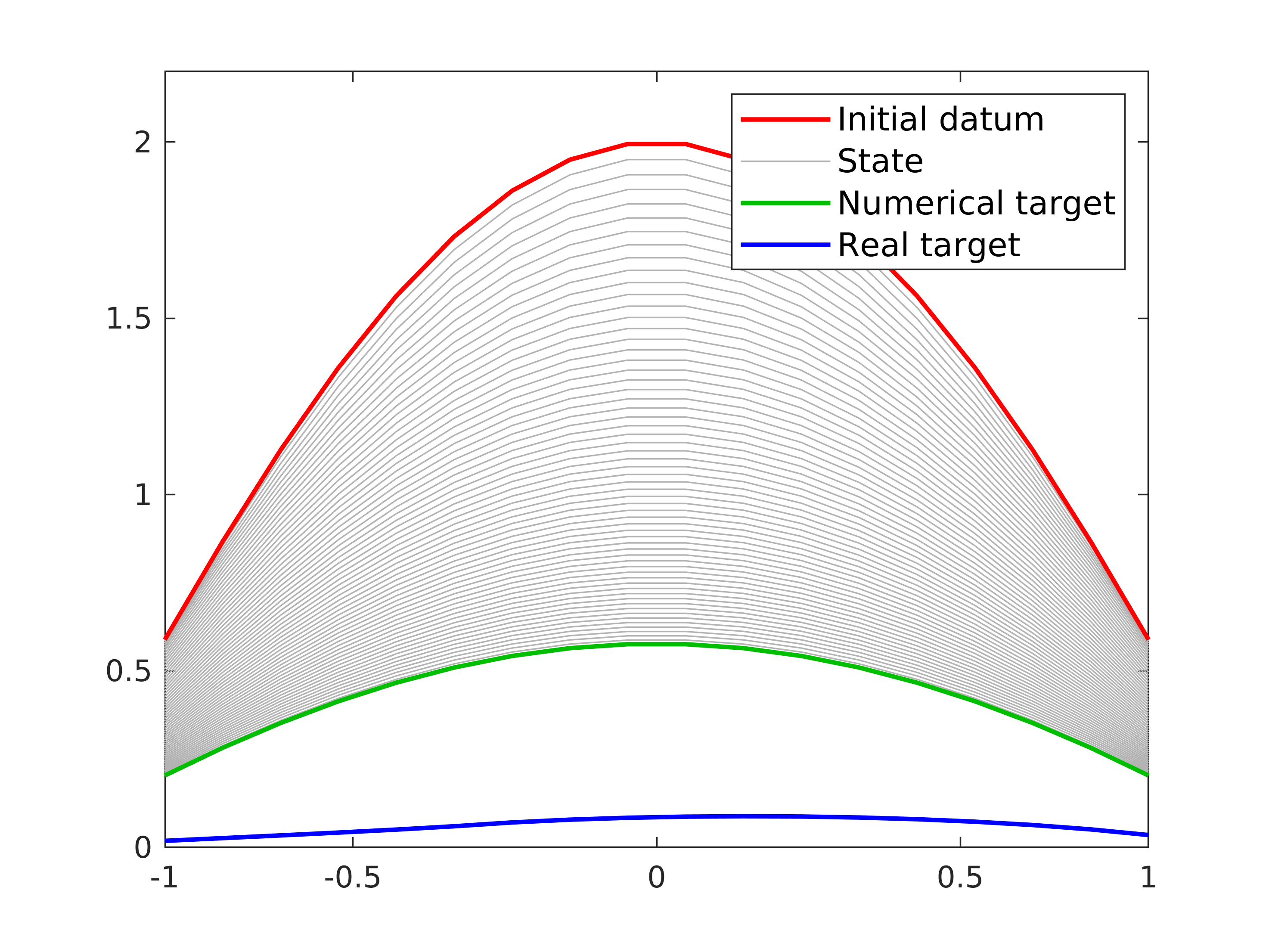}
	\end{minipage}
	\begin{minipage}{0.47\textwidth}
		\centering
		\includegraphics[scale=0.43]{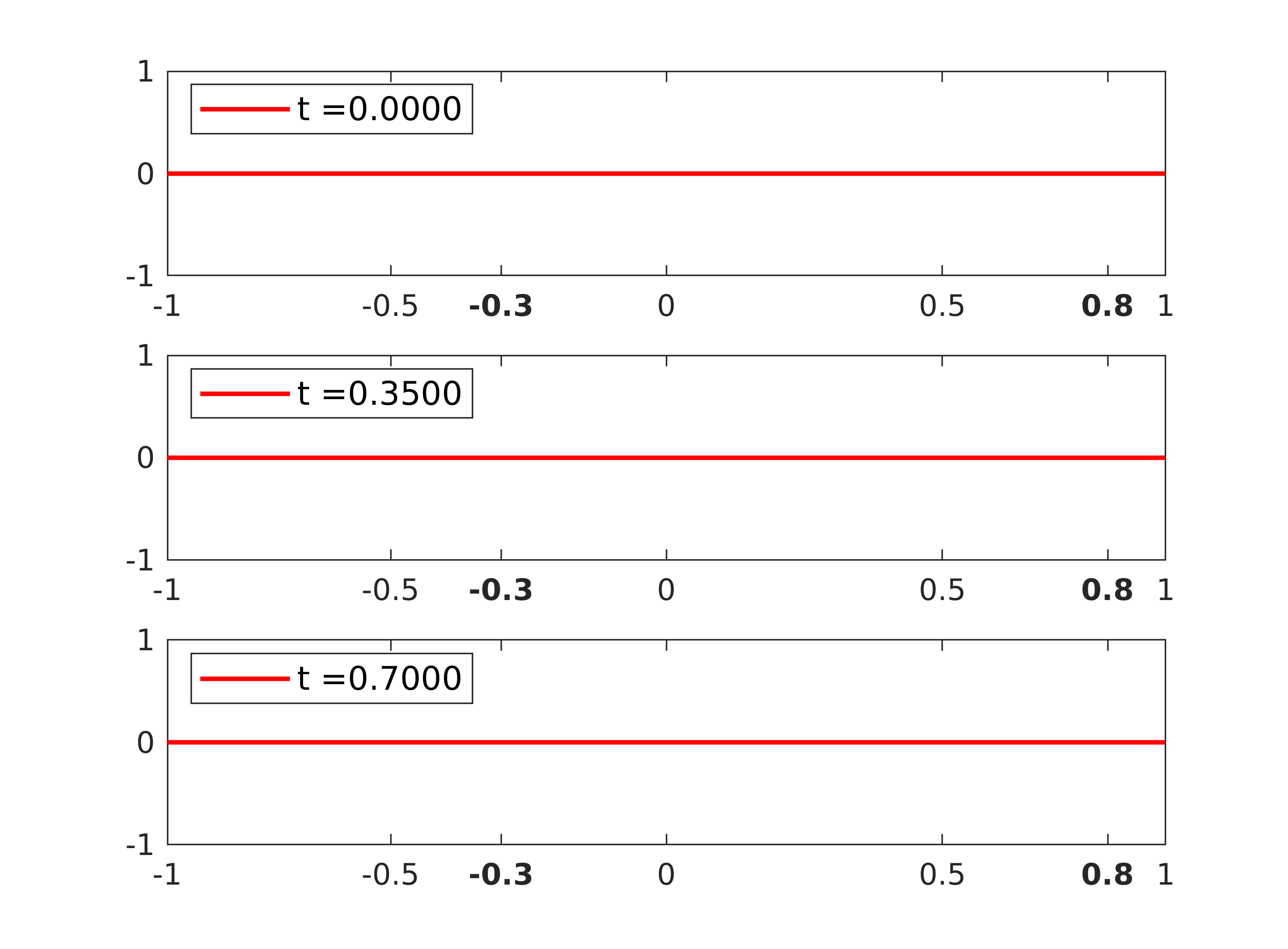}
	\end{minipage}
	\caption{Evolution in the time interval $(0,0.7)$ of the solution of \eqref{frac_heat} with $s=0.8$ (left) and of the control $u$ (right), under the constraint $u\geq 0$. The bold characters highlight the control region $\omega=(-0.3,0.8)$. The control remains inactive during the entire time interval, and the equation is not controllable.}\label{constrained_down_tsmall_fig}
\end{figure}

\subsection{Case 2: $z_0<\widehat{z}(\cdot,T)$}

Let us now consider the case of an initial datum $z_0$ which is smaller than the final target $\widehat{z}(\cdot,T)$. As before, we start by using \texttt{IpOpt} for solving the optimization problem \eqref{Topt}-\eqref{Topt_constr} and computing the minimal controllability time.

Also in this case, we apply a FE method for the space discretization of the fractional Laplacian on a uniform space-grid with $N_x=20$ points and we use an explicit Euler scheme for the time integration on a time-grid with $N_t=100$ points. 

This time, we obtain $T_{\rm min}\simeq 0,2101$ and, once again, our simulations displayed in figure \ref{constrained_up_tmin_fig} show that in this time horizon the fractional heat equation \eqref{frac_heat} is controllable from the initial datum $z_0$ to the desired trajectory $\widehat{z}(\cdot,T)$.
 
\begin{figure}[h]
	\centering
	\includegraphics[scale=0.45]{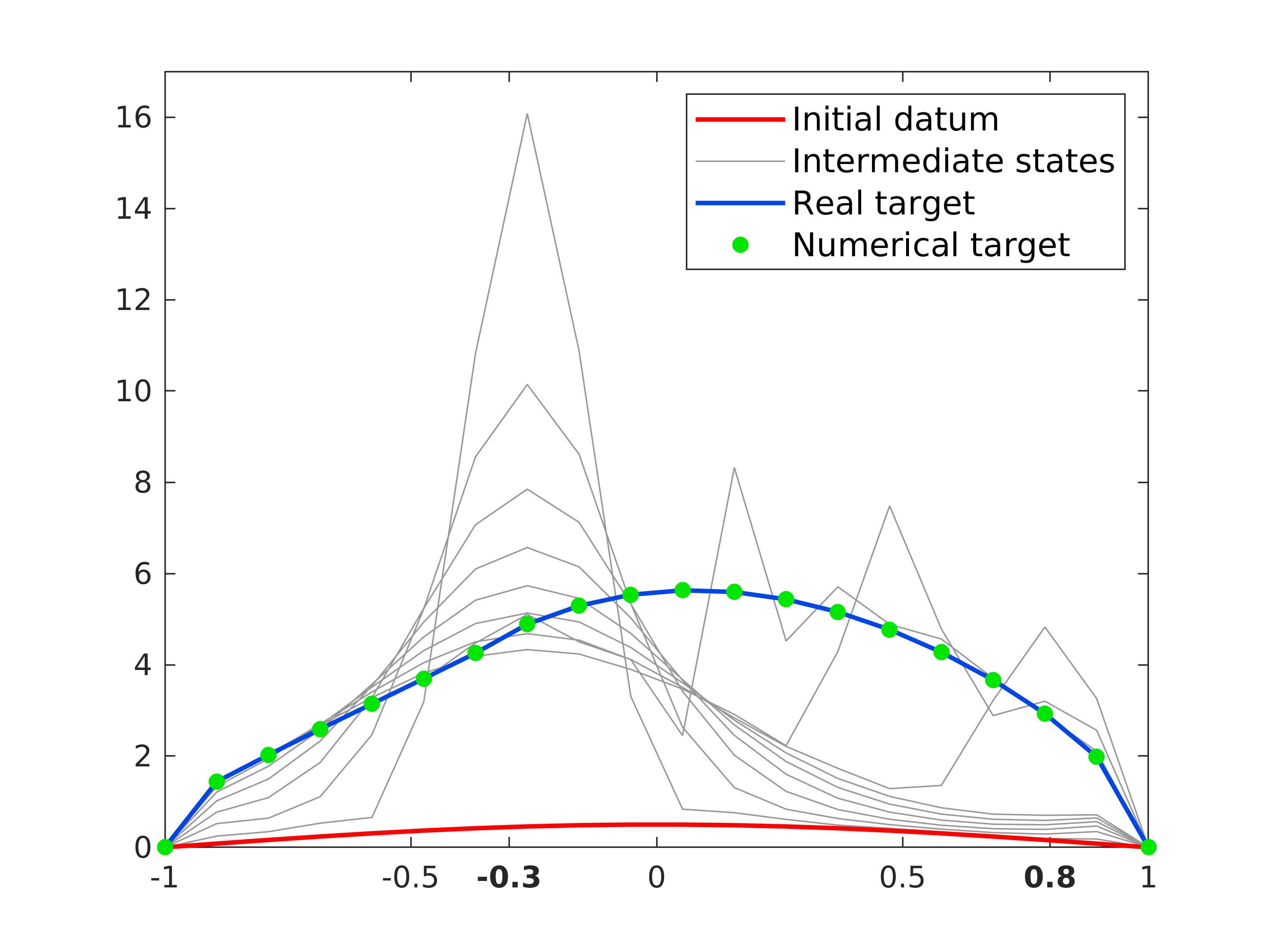}
	\caption{Evolution in the time interval $(0,T_{\rm min})$ of the solution of \eqref{frac_heat} with $s=0.8$. The blue curve is the target we want to reach while the green bullets indicate the target we computed numerically.}\label{constrained_up_tmin_fig}
\end{figure}

Nevertheless, notice that, this time, we want to reach a target which is above the initial datum $z_0$. This means that the control needs to countervail also the dissipation of the solution of \eqref{frac_heat}, by acting on it from the very beginning with a positive force. Moreover, also in this case, the minimal-time control has an atomic nature, as it is shown in figures \ref{contol_up_tmin_disposition_fig} and \ref{contol_up_tmin_fig}.

\begin{figure}[h]
	\centering
	\includegraphics[scale=0.18]{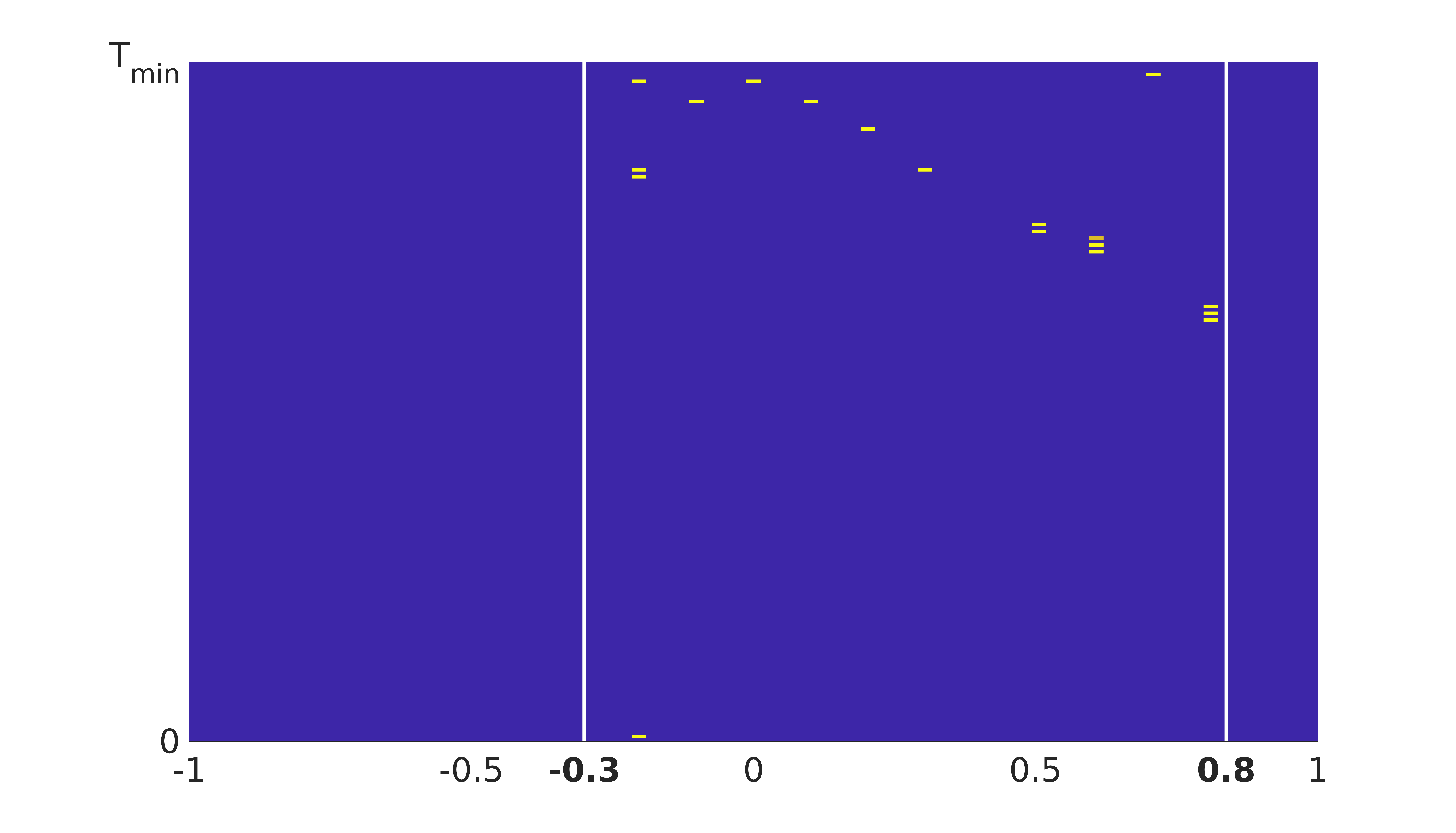}
	\caption{Minimal-time control: space-time distribution of the impulses. The white lines delimit the control region $\omega=(-0.3,0.8)$. The regions in which the control is active are marked in yellow.}\label{contol_up_tmin_disposition_fig}
\end{figure}

\begin{figure}[h]
	\centering
	\includegraphics[scale=0.22]{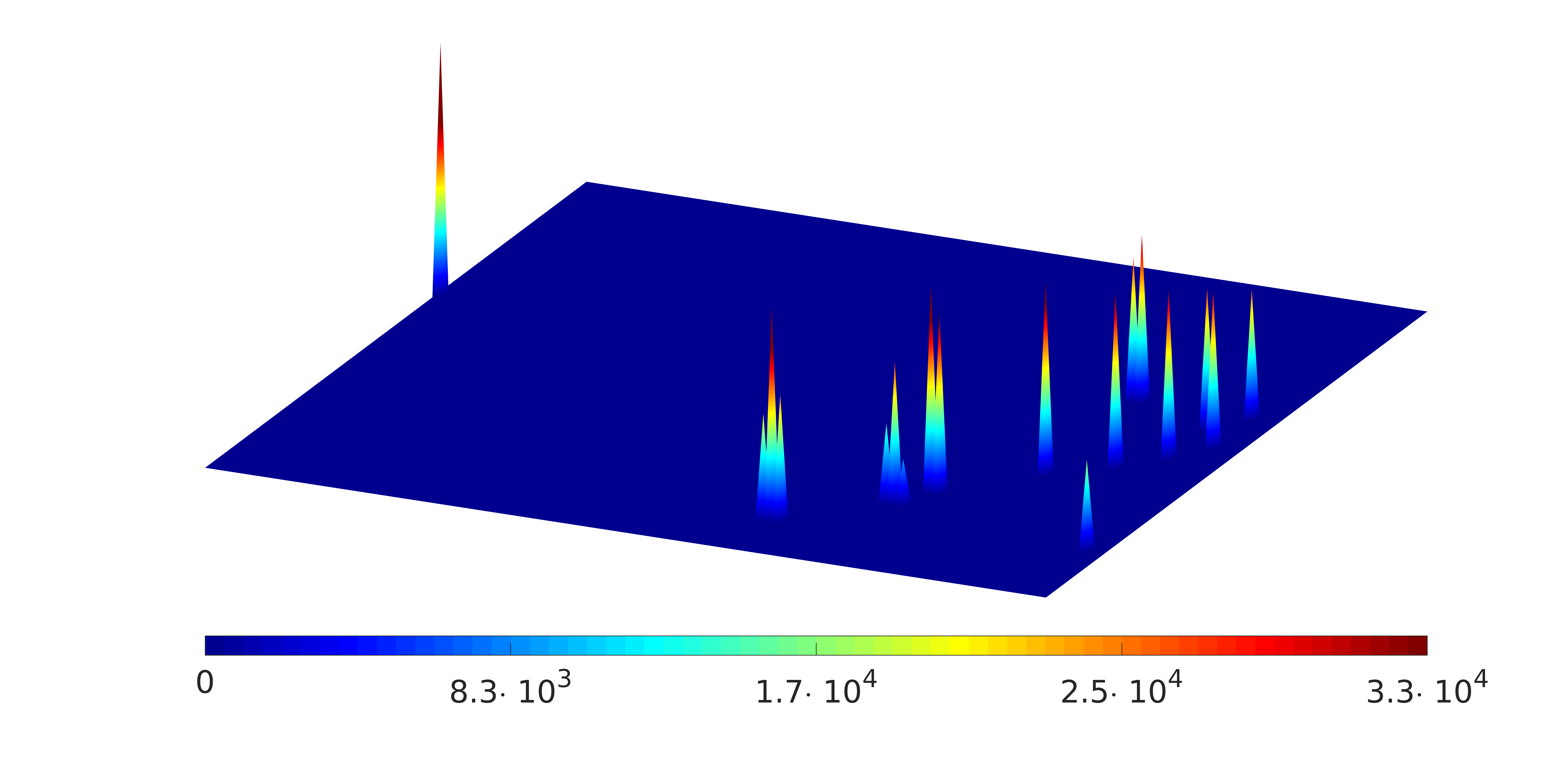}
	\caption{Minimal-time control: intensity of the impulses in logarithmic scale. In the $(t,x)$ plane in blue the time $t$ varies from $t=0$ (left) to $t=T_{\rm min}$ (right).}\label{contol_up_tmin_fig}
\end{figure}

Moreover, when extending the time horizon beyond $T_{\rm min}$ we can observe once again how the solution of \eqref{frac_heat} is still controlled but, this time, the control is distributed in a larger part of the control region $\omega$ and not anymore localized in specific points (see Figures \ref{constrained_up_tlarge_fig} and \ref{control_down_up_fig}).

\begin{figure}[h]
	\centering
	\includegraphics[scale=0.43]{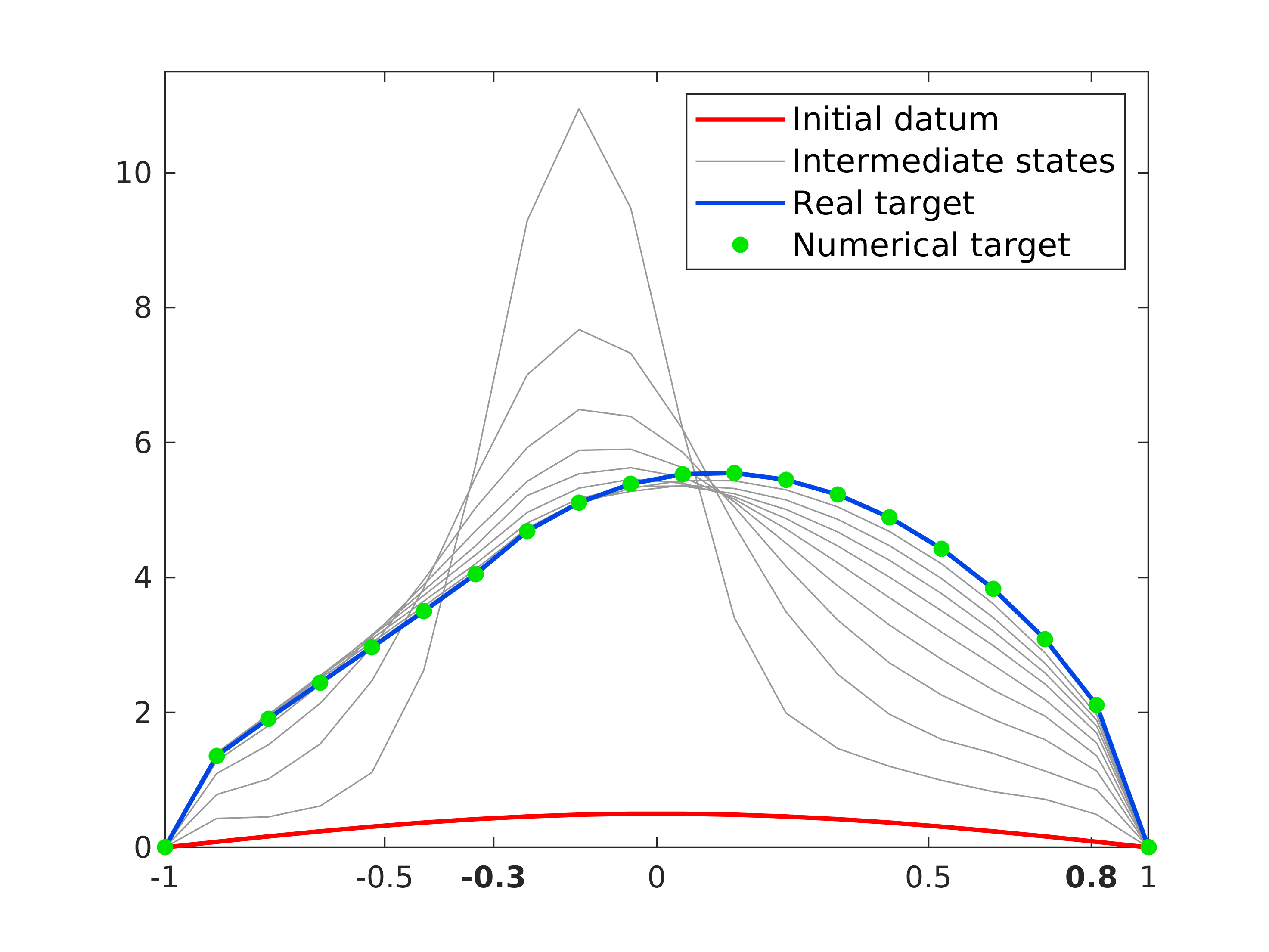}
	\caption{Evolution in the time interval $(0,0.4)$ of the solution of \eqref{frac_heat} with $s=0.8$. The blue curve is the target we want to reach while the green bullets indicate the target we computed numerically.}\label{constrained_up_tlarge_fig}
\end{figure}

\begin{figure}[h]
	\centering
	\includegraphics[scale=0.18]{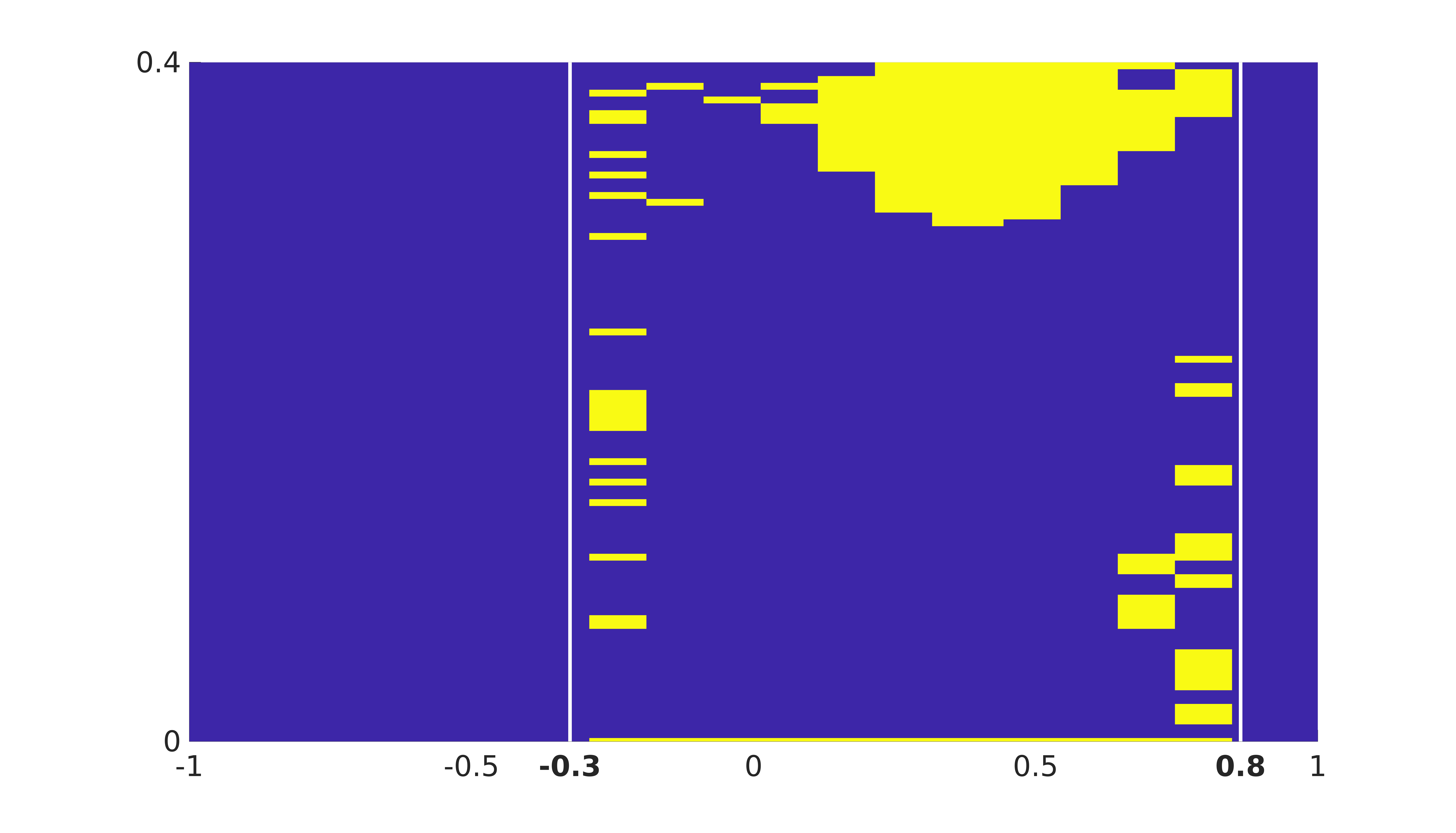}
	\caption{Behavior of the control in time $T=0.4$. The white lines delimit the control region $\omega=(-0.3,0.8)$. The regions in which the control is active are marked in yellow. The atomic nature is lost.}\label{control_down_up_fig}
\end{figure}

Finally, when considering a time horizon $T<T_{\rm min}$ we can notice once more that the solution of \eqref{frac_heat} fails to be controlled to the desired trajectory $\widehat{z}(\cdot,T)$. In fact, Figure \ref{constrained_up_tsmall_fig} shows that the numerical target (displayed in green) computed by employing the tolls of the DyCon computational toolbox for solving the optimization problems \eqref{opt_dycon}-\eqref{opt_dycon_constr} does not totally match the desired target in blue.

\begin{figure}[h]
	\begin{minipage}{0.47\textwidth}
		\centering
		\includegraphics[scale=0.43]{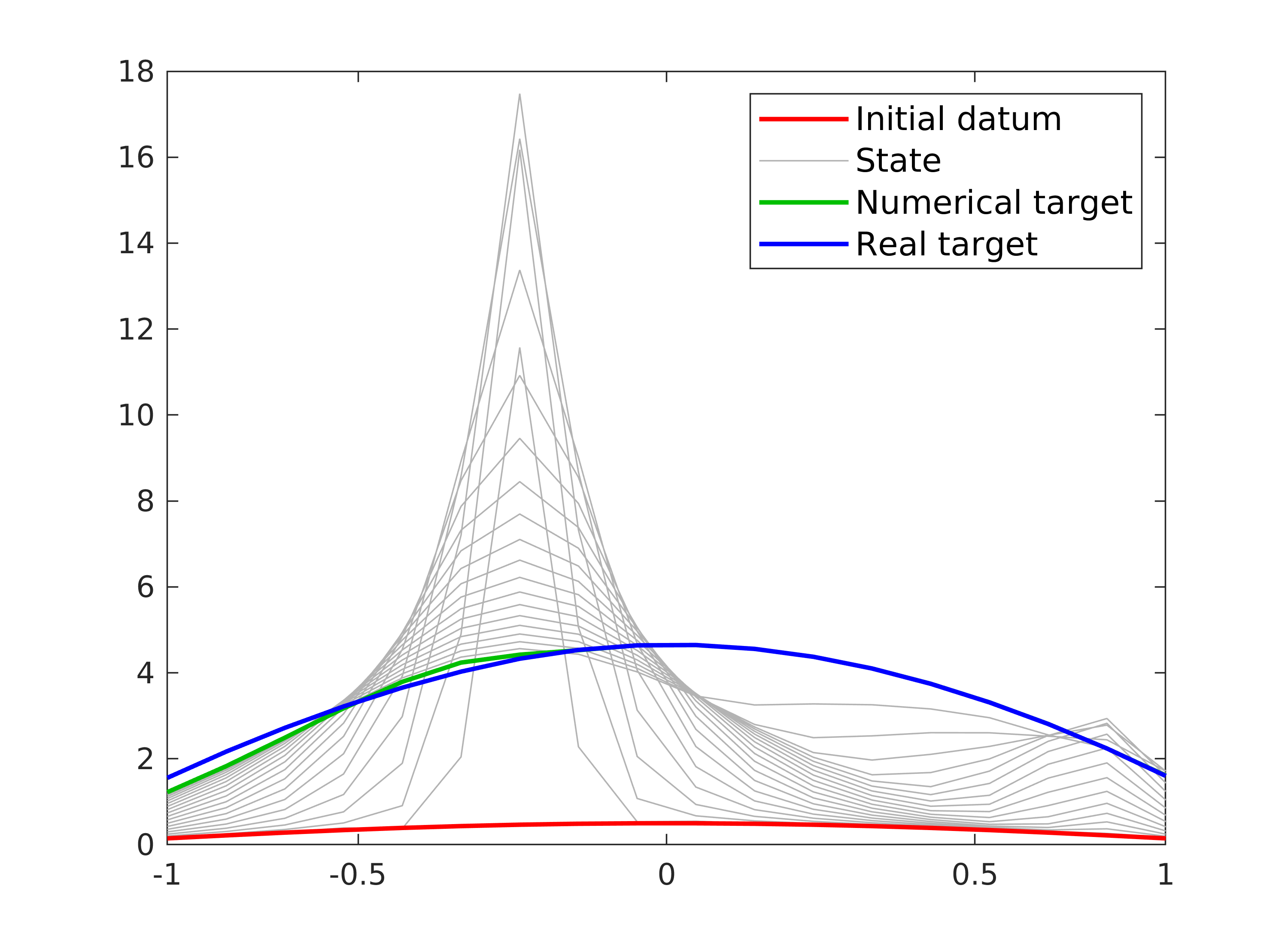}
	\end{minipage}
	\begin{minipage}{0.47\textwidth}
		\centering
		\includegraphics[scale=0.43]{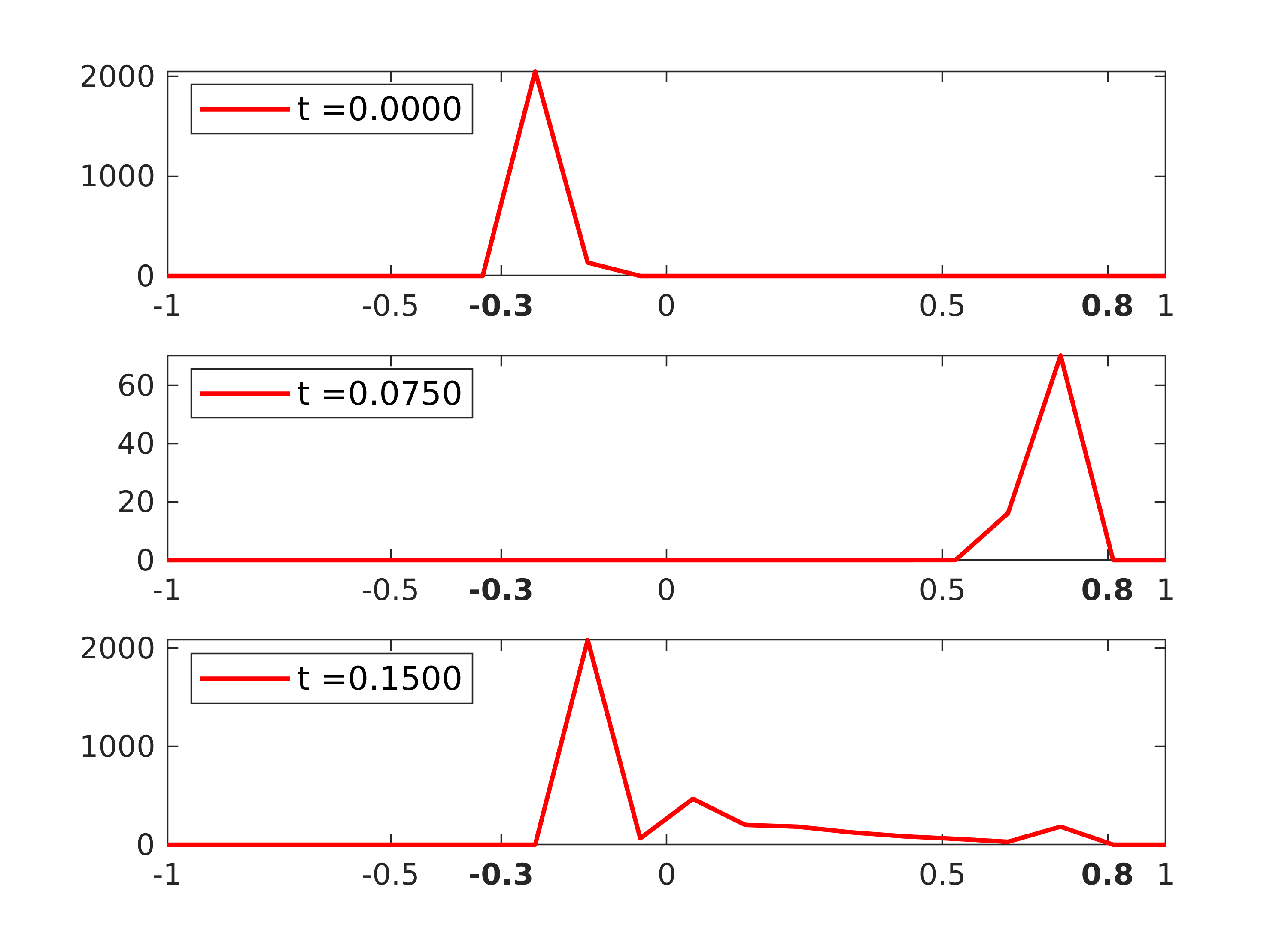}
	\end{minipage}
	\caption{Evolution in the time interval $(0,0.15)$ of the solution of \eqref{frac_heat} with $s=0.8$ (left) and of the control $u$ (right), under the constraint $u\geq 0$. The bold characters highlight the control region $\omega=(-0.3,0.8)$. The equation is not controllable.}\label{constrained_up_tsmall_fig}
\end{figure}


\section{Concluding remarks}\label{sec-con-rem}

In this paper, we have studied the controllability to trajectories for a one-dimensional fractional heat equation under nonnegativity state and control constraints. 

For $s>1/2$, when the controllability for the unconstrained fractional heat equation holds in any positive time $T>0$ by means of an $L^2$-distributed control, we have shown that the introduction of state or control constraints creates a positive minimal time $T_{\rm min}$ for achieving the same result. Moreover, we have also proved that, in this minimal time, constrained controllability holds with controls in the space of Radon measures. 

Our results, which are in the same spirit of the analogous ones obtained in \cite{loheac2017minimal,pighin2018controllability} for the linear and semi-linear heat equations in one and several space dimensions, are supported by the numerical simulations in Section \ref{numerical_sec}.

We present hereafter a non-exhaustive list of open problems and perspectives related to our work.

\begin{itemize}
	\item[1.] \textbf{Extension to the multi-dimensional case.} Our analysis is based on spectral techniques, and applies only to a one-dimensional fractional heat equation. The extension to multi-dimensional problems on bounded domains $\Omega\subset\RR^N$, $N\geq 1$, requires different tools such as Carleman estimates. Nevertheless, obtaining Carleman estimates for the fractional Laplacian is a very difficult issue which has been addressed only partially, and only for problems defined on the whole Euclidean space $\RR^N$ (see, e.g., \cite{ruland2015unique}). The case of bounded domains remains open and it is quite challenging. As one expects, the main difficulties come from the nonlocal nature of the fractional Laplacian, which makes classical PDEs techniques more delicate or even impossible to use. \\
	
	\item[2.] \textbf{Bang-bang nature of the controls.} In Theorem \ref{control_thm_unconstr} we showed that the fractional heat equation \eqref{frac_heat} is controllable to trajectories by means of $L^\infty$-controls satisfying $\norm{u}{L^\infty(\omega)}=\norm{p}{L^1(\omega)}$, $p$ being the solution of the adjoint equation \eqref{frac_heat_adj}. It is then a natural and interesting question to analyze whether these controls have a bang-bang nature, that is,
	\begin{align*}
		u = \norm{p}{L^\infty(\omega\times(0,T))}\textrm{sign}(p).
	\end{align*}
	To this end, the first step would be to show that the zero set of the solutions of the adjoint equation is of null measure, so that the sign of the adjoint state is well defined. This is true in the case of the classical heat equation, as a consequence of the space-time analyticity properties of the solutions. Nevertheless, we do not know whether the same holds also for the fractional heat equation \eqref{frac_heat} since, in this case, as far as we know no space-time analytic regularity results are available (in fact, it is known that solutions are analytic in time, but the space analyticity is still an open problem). Then, this becomes a very challenging problem, both in PDE analysis and control theory.   \\
	
	\item[3.] \textbf{Constrained controllability from the exterior for the fractional heat equation.} In \cite{warma2018null}, the null-controllability from the exterior for the one-dimensional fractional heat equation has been treated. In particular, it has been proved there that, if $s>1/2$, null controllability is achievable by means of a control function $g$ acting on a subset $\mathcal O\subset (\RR\setminus (-1,1))$. Hence, to address constrained controllability in this framework becomes a very interesting issue.\\
	
	\item[4.] \textbf{Bilinear control for the fractional heat equation.} In many frameworks, for instance in population dynamics, the employment of bilinear (multiplicative) control and positivity constraints arise naturally. Hence, an interesting problem to analyze would be the bilinear  control of the fractional heat equation, which can be formulated in terms of the following controlled equation:
	\begin{equation*}
		\begin{cases}
			z_t+\fl{s}{z} = f(x,t)u, \quad &\mbox{ in }\; (-1,1)\times(0,T),\quad  s\in(0,1),\\
			z=0, &\mbox{ in } (\mathbb R \setminus (-1,1))\times (0,T),\\
			z(\cdot,0)=z_0,\; &\mbox{ in }\; (-1,1).
		\end{cases}
	\end{equation*}
	Often times a bilinear control is built by linearization and fixed point arguments and the linearized problem can be handled, in the one-dimensional case, by Ingham-like inequalities such as \eqref{obs2}. For the local case, this issue has been treated in several references, including \cite{cannarsa2011approximate,cannarsa2017multiplicative,cannarsa2010multiplicative,floridia2014approximate}. The extension of those results to the nonlocal setting will be the subject of a future research work. \\
	
	\item[5.] \textbf{Lower bounds for the minimal constrained controllability time.} In Section \ref{numerical_sec}, we gave some numerical lower bound for the minimal constrained controllability time. Nevertheless, the bounds we presented are not optimal. This raises two very important issues. On the one hand, we shall obtain analytical lower bounds for the controllability time. In particular, to understand how it depends on the order $s$ of the fractional Laplacian is evidently a fundamental issue to be clarified. This question was already addressed in \cite{loheac2017minimal,pighin2018controllability} for the local heat equation but, as we discussed in Section \ref{sec44}, the methodology developed in those works does not apply immediately to our case. Therefore, there is the necessity to adapt the techniques of \cite{loheac2017minimal,pighin2018controllability}, or to develop new ones. On the other hand, we should develop a complete analysis of the efficiency of the numerical method we used for estimating this minimal time, in order to determine the accuracy of our approximation. \\
	
	\item[6.] \textbf{Convergence result for the minimal time.} The minimal time $T_{\rm min}$ in the simulations of Section \ref{numerical_sec} is just an approximation computed by solving numerically the optimization problem \eqref{Topt}-\eqref{Topt_constr}. The validity of these computational result should be confirmed by showing that this minimal time of control for the discrete problem converges towards the continuous one as the mesh-sizes tend to zero. This could be done by adapting the procedure presented in \cite[Section 5.3]{loheac2017minimal}. Nevertheless, we have to mention that, in order to corroborate this procedure, it is required the knowledge of an analytic lower bound for $T_{\rm min}$ which, at the present stage, it is unknown (see point 4 above).
\end{itemize}

\section*{Acknowledgments} The authors would want to thank Debayan Maity (TIFR Centre for Applicable Mathematics, Bangalore, India) for his valuable help in some delicate technical issues of our proofs. A special thanks goes to Deyviss Jesus Oroya (DeustoTech, University of Deusto, Bilbao, Spain) for his contribution to the simulations in Section \ref{numerical_sec}.

\bibliographystyle{acm}
\bibliography{biblio}

\end{document}